\tikzset{
  LabelStyle/.style = { rectangle, rounded corners, draw,
                        minimum width = 1em, 
                        font =  },
  VertexStyle/.append style = { inner sep=3pt,
                                font = \large},
  EdgeStyle/.append style = {->, bend left} }
\theoremstyle{plain}
\newtheorem{theorem}{Theorem}
\newtheorem{prop}[theorem]{Proposition}
\newtheorem{lemma}[theorem]{Lemma}
\newtheorem{coro}[theorem]{Corollary}
\theoremstyle{definition}
\newtheorem{definition}[theorem]{Definition}
\newtheorem{remark}[theorem]{Remark}
\newtheorem{question}[theorem]{Question}
\newtheorem{example}[theorem]{Example}
\newcommand{\Z}{{\mathbb Z}}
\newcommand{\R}{{\mathbb R}}
\newcommand{\N}{{\mathbb N}}
\newcommand{\mc}{\mathcal}
\newcommand{\freq}{\operatorname{freq}}
\newcommand{\dsub}{\rho}
\newcommand{\exend}{\hfill $\Diamond$}
\begin{document}

\title{
Dynamical systems arising
from random substitutions
}

\author{Dan Rust}
\author{Timo Spindeler}
\date{\today}
\address{Fakult\"at f\"ur Mathematik, Universit\"at Bielefeld, \newline
\hspace*{\parindent}Postfach 100131, 33501 Bielefeld, Germany}
\email{drust@math.uni-bielefeld.de, tspindel@math.uni-bielefeld.de}

\begin{abstract}  
Random substitutions are a natural generalisation of their classical `deterministic' counterpart, whereby at every step of iterating the substitution, instead of replacing a letter with a predetermined word, every letter is independently replaced by a word from a finite set of possible words according to a probability distribution. We discuss the subshifts associated with such substitutions and explore the dynamical and ergodic properties of these systems in order to establish the groundwork for their systematic study. Among other results, we show under reasonable conditions that such systems are topologically transitive, have either empty or dense sets of periodic points, have dense sets of linearly repetitive elements, are rarely strictly ergodic, and have positive topological entropy.
\end{abstract}

\keywords{Primitive substitutions, random substitutions, symbolic dynamics}

\subjclass[2010]{37B10, 37A50, 37B40, 37A25}

\maketitle

\section{Introduction}\label{sec:intro}
Symbolic dynamical systems associated to primitive substitutions are the prototypical examples of minimal subshifts. As such, their study has been extensive \cite{bible,dhs,fog,qu} and various approaches to extending the theory have been explored, including \emph{S-adic} or \emph{mixed} systems \cite{bd,fe,gm,ru}, and systems associated to non-primitive substitutions \cite{bkm,mr}. Motivated by examples arising in physics within the study of quasicrystals, Godr\`{e}che and Luck considered the situation that the substituted image of a letter is a random variable \cite{gl}, where we now call such systems \emph{random} or \emph{stochastic}. Others have independently studied similar generalisations of substitutions, under the guise of \emph{multi-valued} or \emph{set-valued} substitutions \cite{dm}, or \emph{0L-systems} \cite{rs}. This randomised approach has recently been revisited \cite{bss,cu,de,mo,mo2,wi} with several canonical examples now being established and studied (principally via their entropy and spectrum).
In particular, Dekking has recently emphasised the need for a systematic approach in the study of random substitutions \cite{de2}.

A general theory of random substitution subshifts (hereby abbreviated to RS-subshifts) has yet to be established. It is the goal of this article to remedy this situation and provide some key topological, dynamical and ergodic theoretic results with which one is usually accustomed when studying particular classes of symbolic subshifts. Throughout this work, we establish results which are direct generalisations of well-known results appearing in the classical study of deterministic substitutions. We highlight how the situation changes when moving from the deterministic to the random situation via examples, in order to illustrate the new and more interesting phenomena. Several of these results have been established previously for particular examples and we give particular mention to the PhD thesis of Moll \cite{mo} from where several useful ideas have been borrowed.

In Section \ref{sec:background}, we outline the basic definitions of random substitutions, their associated RS-subshifts, and introduce the primary standing assumption to be considered in this work; namely the \emph{primitivity} of a substitution. Under assumption of primitivity, we establish a simple criterion in terms of the possible lengths of substituted letters for deciding when an RS-subshift is either empty or non-empty. We show that any element of the RS-subshift generates the entire RS-subshift as an orbit closure under the action of the shift and iterated substitution.

In Section \ref{sec:dynamics} we establish the key dynamical and topological properties of an RS-subshift associated to a primitive random substitution. We prove that an RS-subshift is topologically transitive by constructing an explicit element with a dense shift-orbit. RS-subshifts are in general not minimal. We show a dichotomy result for the set of periodic points with respect to the shift: the set of periodic points is either empty or dense in an RS-subshift. Due to the potentially non-trivial structure of periodic points in these subshifts, this allows for robust tools such as the Artin--Mazur zeta function to be used in the study of random substitutions, unlike in the deterministic setting where the structure of periodic points is trivial. We show that, although RS-subshifts are in general not minimal, the set of minimal subspaces is dense in the subshift---in particular, we show that the set of linearly repetitive elements of an RS-subshift is dense. As a further dichotomy result, we show that an RS-subshift is either finite or is homeomorphic to a Cantor set. As a consequence of the topological transitivity of the subshift, we show that the associated tiling space is connected.

Section \ref{sec:ergodic} is devoted to studying some measure-theoretic properties of RS-subshifts. A key tool used in establishing results is the notion of an \emph{induced} or \emph{collared} substitution. We expect that this will be a useful tool in the future study of random substitutions. The right Perron--Frobenius eigenvectors of the substitution matrices of these induced substitutions give rise to shift invariant (ergodic) measures. Moreover, we characterise those RS-subshifts which are uniquely/strictly ergodic.

In Section \ref{sec:entropy}, we provide a very mild condition under which an RS-subshift exhibits positive topological entropy, together with loose lower bounds in terms of the letter-frequencies. This opens up the study of random substitutions to similar tools developed for the study of shifts of finite type and other positive entropy subshifts, where the topological entropy is a powerful invariant. Again, this is in contrast to the deterministic setting where the entropy is always zero.

We study several key examples in Section \ref{sec:examples} which exhibit some of the more interesting behaviours described in the previous sections. We provide two very different representations of the full $2$-shift as an RS-subshift. We show that the golden shift can also be realised as an RS-subshift---this leads to the question of whether SFTs can typically be represented as such. This question will be addressed in forthcoming work \cite{grs}. We show that examples of sofic shifts of non-finite type can also be described as RS-subshifts. Of particular interest is the study of the \emph{random period doubling} substitution which shares many of the dynamical properties of a sofic shift, and which we show is not topologically mixing. We conclude with a list of open questions motivated by the results of the previous sections.

\section{Random substitutions}\label{sec:background}
An \emph{alphabet} $\mc A = \{a_1, \ldots, a_n\}$ is a finite set of symbols $a_i$ referred to as \emph{letters}. A \emph{word} $u$ in $\mc A$ is a finite concatenation of letters $u = a_{i_1}\cdots a_{i_{\ell}}$ and we let $|u| = \ell$ denote the \emph{length} of $u$. The \emph{empty word} $\epsilon$ is the unique word of length $0$. We let $|u|_{a_i} = \ell_i$ denote the number of times $\ell_i$ that the letter $a_i$ appears in the word $u$. We let $\mc A^\ell$ denote the set of length-$\ell$ words in $\mc A$ and we let $\mc A^+ = \bigcup_{\ell \geq 1} \mc A^\ell$ denote the set of all finite words in $\mc A$ with positive length. If $\epsilon$ is also considered, then we write $\mc A^\ast = \mc A^+ \cup \{\epsilon\}$. The \emph{concatenation} $uv$ of two words $u = a_{i_1} \cdots a_{i_\ell}$ and $v = a_{j_1} \cdots a_{j_m}$ is given by $uv = a_{i_1} \cdots a_{i_\ell} a_{j_1} \cdots a_{j_m}$. We let $\mc A^\Z = \{\cdots a_{i_{-1}}a_{i_0}a_{i_1}\cdots \mid a_{i} \in \mc A\}$ denote the set of \emph{bi-infinite sequences} in $\mc A$ and endow $\mc A^\Z$ with the product topology, where $\mc A$ is a finite discrete space. For $w \in \mc A^\Z$ and $i \leq j$, let $w_{[i,j]}$ denote the finite word $w_{[i,j]} := w_{i} w_{i+1} \cdots w_{j-1} w_j$.

Let $\mc P (Y)$ be the \emph{power set} of $Y$. By an abuse of notation, we assign a \emph{set-valued function} $f \colon X \to Y$ to a function $f \colon X \to \mc P (Y)$, where the distinction between functions and set-valued functions will always be clear by context. If the set $f(x)$ is finite for all $x \in X$ then we call $f$ a \emph{finite-set-valued function}.

\begin{definition}\label{Def:1}
  Let $\mc A = \{a_1, \ldots, a_n\}$ be a finite alphabet. A \emph{random substitution} (or \emph{stochastic substitution}) $(\vartheta, \boldsymbol{P})$ is a finite-set-valued function $\vartheta \colon \mc A \to \mc A^+$ together with a set of probability vectors
  \[
  \boldsymbol{P} = \left\{{\boldsymbol p}_i=(p_{i1},\ldots,p_{ik_i}) \mid {\boldsymbol p}_i\in\, [0,1]^{k_i}\  \;
  \text{ and }\; \sum_{j=1}^{k_i}p_{ij}=1,\ 1\leq i\leq n\right\}
\] 
   for $k_1, \ldots, k_n \in \N \setminus \{0\}$ such that
\[
  \vartheta \colon \; a_i\mapsto 
\begin{cases}
  w^{(i,1)}, & \text{with probability } p_{i1}, \\
  \quad \vdots  & \quad \quad \quad \quad \ \vdots \\
  w^{(i,k_i)}, & \text{with probability } p_{ik_i},
\end{cases} 
\]  
  for $1\leq i\leq n$, where each $w^{(i,j)} \in \mc A^+$. The corresponding 
  \emph{random substitution matrix} is defined by
\[
    M_{(\vartheta,\boldsymbol{P})}\, := \left[\sum_{q=1}^{k_j} p_{jq} 
   |w^{(j,q)}|_{a_i} 
   \right]_{1\leq i,j\leq n}.
\]
If the set of words $\vartheta(a_i)$ is a singleton for each $a_i \in \mc A$, then we call $\vartheta$ \emph{deterministic}. If $p_{ij} \neq 0$ for all values of $i,j$ then we say that $(\vartheta,\boldsymbol{P})$ is \emph{non-degenerate}; otherwise, we say that $(\vartheta,\boldsymbol{P})$ is \emph{degenerate}.
\end{definition}

Less formally, a random substitution $\vartheta$ assigns to every letter $a_i \in \mc A$, a finite set of words $w^{(i,j)}$ in $\mc A$ with a corresponding probability $p_{ij}$. There is a natural extension to random substitutions that assign possibly infinitely many words to the letters of $\mc A$, but we only consider the finite-set-valued case in the present article (with the exception of a single example in Section \ref{sec:examples}). Most of our results extend to the infinite-valued setting, but not all.
\begin{remark}
Our statements will often be independent of the random variables $p_{ij}$ in the non-degenerate setting and so we may suppress them in the notation and write a random substitution more compactly as 
\[
\vartheta \colon a_i \mapsto \{w^{(i,1)}, \ldots, w^{(i,k_i)}\},
\]
where we implicitly assume that $\vartheta$ is non-degenerate and we can suppress the pair notation that includes $\boldsymbol{P}$.	 This probability-independent setting coincides with the definition of a \emph{multi-valued substitution} in the literature \cite{cu,dm}, where a multi-valued substitution is simply a finite-set-valued function $\vartheta \colon \mc A \to \mc A^+$. For the sake of clarity however, we will exclusively use the term `random substitution', even in such cases.	\exend
\end{remark}

As in the classical setting for deterministic substitutions, there is an obvious extension of the action of a random substitution to the set of finite words $\mc A^+$ and to the set of bi-infinite sequences $\mc A^\Z$. The image of a word $u \in \mc A^+$ under the action of a random substitution $\vartheta$ is a finite set of words $\vartheta(u) = \{u^{(1)}, \ldots, u^{(m)}\}$ where each $u^{(i)}$ is given by applying $\vartheta$ independently to every letter in $u$ and concatenating the images in the order prescribed by $u$. As a basic example, consider the substitution $\vartheta \colon \mc A \to \mc A^+ $ on the alphabet $\mc A = \{a,b\}$ given by
\[
  \vartheta\colon
  \begin{cases}
      a\mapsto 
      \begin{cases}
      ab, & \text{with probability } p,      \\
      ba, & \text{with probability } 1-p,
      \end{cases} \\
      b\mapsto aa, \quad\:\:\:\text{with probability }1.
  \end{cases} 
\]
Then $\vartheta(aba)$ is the set of words $\{abaaab,abaaba,baaaab,baaaba\}$ with probabilities $p^2$, $p(1-p)$, $(1-p)p$, $(1-p)^2$ respectively. We may then consider finite powers of the random substitution $\vartheta^k$ so that, in the above example for instance, $\vartheta^2(a) = \vartheta(\{ab,ba\}) = \{abaa,baaa,aaab,aaba\}$ (with corresponding probabilities).
The extension to $\mc A^\Z$ is similar.

Consequently, $\vartheta(a_i)$, and hence 
$\vartheta(u)$ for any $u \in \mc A^+$, should be considered as a random 
variable with finitely many possible realisations. The $(i,j)$-entry of the substitution matrix $M_\vartheta$ is then the expected number of times\footnote{Properly, the matrix $M_{(\vartheta,\boldsymbol{P})}$ should itself be considered as a random variable, where the definition of $M_{(\vartheta,\boldsymbol{P})}$ introduced in Definition \ref{Def:1} is then the expected value of that random variable. For our purposes, this formal consideration is unnecessary.} that the letter $a_j$ appears in the word $\vartheta(a_i)$.

To proceed, we need to introduce generalised notation from symbolic dynamics that is particular to the random setting.

\begin{definition}
Let $u \in \mc A^+$ and $v \in \mc A^+$ or $v \in \mc A^\Z$. By $u \triangleleft v$ we mean that $u$ is a subword of $v$, and by $u \blacktriangleleft \vartheta^k(v)$ we mean 
that $u$ is a subword of at least one image of $v$ under $\vartheta^k$ for 
some $k \in \N$. Similarly, by $u \overset{\bullet}{=} \vartheta^k(v)$ we mean 
that there is at least one image of $v$ under $\vartheta^k$ that coincides with 
$u$. We say that $u$ is a \emph{realisation} of $\vartheta^k(v)$.
\end{definition}

\begin{definition}
A random substitution $\vartheta \colon \mc A^+ \to \mc A^+$ is called \emph{irreducible}
if for each pair $(i,j)$ with $1\leq i,j\leq n$, there is a $k\in\N$ such that $a_i
\blacktriangleleft\vartheta^k(a_j)$. Moreover, $\vartheta$ is called \emph{primitive}
if there is a $k\in\N$ such that for all 
$1\leq i,j\leq n$ we have $a_i\blacktriangleleft\vartheta^k(a_j)$.
\end{definition}

\begin{remark}
As in the deterministic case, a non-degenerate random substitution $\vartheta$ is irreducible/primitive 
if and only if $M_{\vartheta}$ is an irreducible/primitive matrix. Note that degenerate random substitutions can be irreducible/primitive and have non-irreducible/non-primitive substitution matrix.
\exend
\end{remark}

\begin{definition}
A word $u \in \mc A^+$ is called ($\vartheta$-)\emph{legal} if there is a $k \in \N$ such 
that $u \blacktriangleleft \vartheta^k(a_j)$ for some $j \in \{1, \ldots, n\}$. We define the 
\emph{language} of $\vartheta$ by
\[
\mc L_{\vartheta}:=\{u \in \mc A^* \mid u \text{ is } \vartheta\text{-legal}\}.
\]
If $w \in \mc A^+$ or $w \in \mc A^{\Z}$, we define the language of $w$ by
\[
\mc L(w):=\{u \in \mc A^* \mid u \triangleleft w\}.
\]
If $X \subseteq \mc A^\Z$, we define the language of $X$ by 
\[
\displaystyle{\mc L(X) := \bigcup_{w \in X} \mc L(w)}.
\]
We let $\mc L_{\vartheta}^{\ell} \subseteq \mc L_{\vartheta}, \mc L^{\ell}(w) \subseteq \mc L(w)$ and $\mc L^{\ell}(X) \subseteq \mc L(X)$ denote the set of elements of length $\ell$.

Let $(X_\vartheta,S)$ denote the \emph{random substitution subshift} (\emph{RS-subshift} for short) associated with the random substitution $\vartheta$, where
\[
X_{\vartheta}:= \{w\in\mc A^{\Z} \mid \mc L(w) \subseteq \mc L_{\vartheta}\}
\]
and $S \colon X_\vartheta \to X_\vartheta$ is the usual shift operator defined by $S(w)_i = w_{i+1}$ for an element $w \in X_\vartheta$.
\end{definition}

\begin{remark}
The language and the RS-subshift are independent of the explicit values of $p_{ij}$, assuming the substitution is non-degenerate.   \exend
\end{remark}

As $X_\vartheta$ is defined in terms of a language, it is immediately a closed (hence compact) shift invariant subspace of $\mc A^\Z$, and so $X_\vartheta$ is a subshift.
Unlike in the deterministic case, it is not true that a primitive random substitution $\vartheta$ always has non-empty RS-subshift $X_\vartheta$. Take as an example the primitive substitution $\vartheta \colon a \mapsto \{a,b\}, b\mapsto \{a\}$ whose language $\mc L_\vartheta = \{a,b\}$ is finite and so has empty RS-subshift. This example also highlights the fact that we can only say that $\mc L(X_\vartheta) \subseteq \mc L_\vartheta$ in general, in contrast to the primitive deterministic case where we always have equality. Thankfully, an RS-subshift is empty only in very specific circumstance and we can characterise the non-degenerate primitive random substitutions whose corresponding RS-subshift is non-empty.

\begin{prop}
Let $\vartheta$ be a primitive random substitution. Then, the RS-subshift $X_{\vartheta}$ is empty if and only if the set of realisations of $\vartheta(a_i)$ consists only of words of length $1$, for every $a_i\in\mc A$. 
\end{prop}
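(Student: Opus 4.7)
The \emph{if} direction is immediate. If every realisation of $\vartheta(a_i)$ is a single letter for each $a_i \in \mc A$, then inductively every realisation of $\vartheta^k(a_i)$ is also a single letter, so $\mc L_\vartheta \subseteq \mc A$. Any bi-infinite sequence $w \in \mc A^\Z$ has subwords of length $2$, so $\mc L(w) \not\subseteq \mc L_\vartheta$, and therefore $X_\vartheta = \emptyset$.

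For the converse, I argue by contrapositive: assume some $\vartheta(a_{i_0})$ admits a realisation $w = w_1 \cdots w_m$ with $m \geq 2$, and aim to produce an element of $X_\vartheta$. My plan is first to exhibit legal words of arbitrary length by iterating the substitution, then to deduce the existence of a point of $X_\vartheta$ by a standard compactness argument. Let $K$ be the primitivity constant, so that $a_{i_0} \blacktriangleleft \vartheta^K(a_j)$ for every $j$. I first construct a seed realisation $u^{\ast}$ of $\vartheta^{K+1}(a_{i_0})$: for each $t \in \{1, \ldots, m\}$, primitivity provides a realisation $u_t$ of $\vartheta^K(w_t)$ containing $a_{i_0}$; set $u^{\ast} := u_1 u_2 \cdots u_m$. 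Then $u^{\ast}$ is a realisation of $\vartheta^K(w)$, hence of $\vartheta^{K+1}(a_{i_0})$, with $|u^{\ast}| \geq m \geq 2$ and containing a copy of $a_{i_0}$.

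Inductively, I build realisations $v_n$ of $\vartheta^{n(K+1)}(a_{i_0})$, each with a marked occurrence of $a_{i_0}$, starting from $v_1 := u^{\ast}$. Given $v_n$, I form $v_{n+1}$ by applying $\vartheta^{K+1}$ letter-by-letter to $v_n$, choosing $u^{\ast}$ at the marked $a_{i_0}$-position (and marking an occurrence of $a_{i_0}$ inside this embedded copy of $u^{\ast}$), and an arbitrary realisation elsewhere. Since $|u^{\ast}| \geq 2$ and every other letter contributes length at least $1$, we obtain $|v_{n+1}| \geq |v_n| + |u^{\ast}| - 1 \geq |v_n| + 1$, so $|v_n| \to \infty$. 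Consequently $\mc L_\vartheta$ contains legal words of every length.

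A routine compactness argument then yields $X_\vartheta \neq \emptyset$: for each $n$, pick $\ell_n \in \mc L_\vartheta$ of length at least $2n+1$, let $s_n$ be its central length-$(2n+1)$ subword, and define $w^{(n)} \in \mc A^\Z$ by setting $w^{(n)}_{[-n, n]} := s_n$ and extending arbitrarily. Compactness of $\mc A^\Z$ yields a subsequential limit $w \in \mc A^\Z$; any finite subword of $w$ eventually agrees with a subword of some $s_{n_k}$, hence is legal, so $w \in X_\vartheta$. The main delicacy is the recursive length-growth construction, which requires simultaneously propagating a length-$\geq 2$ chunk (from the realisation $w$) and a copy of $a_{i_0}$ (from primitivity) through iterations.
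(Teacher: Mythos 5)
Your proof is correct and follows essentially the same route as the paper: use primitivity to pull the letter with a length-$\geq 2$ realisation back into the image of every letter, iterate to produce legal words of unbounded length, and conclude by compactness. The only difference is bookkeeping --- the paper shows every letter's image under $\vartheta^{N+1}$ admits a realisation of length $\geq 2$ and hence gets doubling ($\geq 2^k$ after $k(N+1)$ steps), whereas you track a single marked occurrence of $a_{i_0}$ and gain one letter per step; both suffice.
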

\begin{proof}
If the set of realisations of $\vartheta(a_i)$ consists only of words of length $1$, for every $a_i$, it is immediate that the subshift is empty.

On the other hand, assume that there is a letter, say $a_k$, such that at least one
realisation of $\vartheta(a_k)$ has length at least $2$. Then, there is a $N\in \N$ such that $a_k$ is a subword of $\vartheta^N(a_i)$ for all $a_i\in\mc A$. Hence, $\vartheta^{N+1}(a_i)$ has at least length $2$. Arguing the same lines, we obtain that $\vartheta^{2N+2}(a_i)$ has length at least $4$. Inductively, we find that $\vartheta^{kN+k}(a_i)$ has length at least $2^k$. As $\mc A$ is finite, this implies that the subshift is non-empty. 
\end{proof}

From now on, we assume that $X_\vartheta$ is always non-empty. Many of the proofs that follow will require picking an element in $X_\vartheta$ and so this assumption will be implicit.

\begin{lemma}\label{lem:closed}
If $w \in X_\vartheta$, then $\vartheta(w) \in X_\vartheta$ for every realisation of $\vartheta(w)$.
\end{lemma}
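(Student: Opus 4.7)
The plan is to unpack the definition $X_{\vartheta} = \{w \in \mc A^{\Z} \mid \mc L(w) \subseteq \mc L_{\vartheta}\}$ and show that for any realisation $w' \overset{\bullet}{=} \vartheta(w)$, every finite subword of $w'$ is $\vartheta$-legal. Fix such a realisation $w'$ and an arbitrary finite subword $u \triangleleft w'$.

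The first step is a localisation observation. By definition, $w'$ is obtained by independently choosing, for each position $i \in \Z$, a realisation $x^{(i)} \overset{\bullet}{=} \vartheta(w_i)$ and concatenating them in order. Since $u$ occupies only finitely many positions in $w'$, there exist integers $N_1 \leq N_2$ such that $u$ is a subword of the finite concatenation $x^{(N_1)} x^{(N_1+1)} \cdots x^{(N_2)}$. Setting $v := w_{[N_1, N_2]}$, we obtain $v \in \mc L(w)$, and the concatenation $x^{(N_1)} \cdots x^{(N_2)}$ is itself a realisation of $\vartheta(v)$, so $u \blacktriangleleft \vartheta(v)$.

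Next, I invoke the hypothesis $w \in X_{\vartheta}$: since $v \in \mc L(w) \subseteq \mc L_{\vartheta}$, there exist $k \in \N$ and $a_j \in \mc A$ together with a realisation $v^\ast \overset{\bullet}{=} \vartheta^k(a_j)$ such that $v \triangleleft v^\ast$. Now I want to promote this to a realisation of $\vartheta^{k+1}(a_j)$ that contains $u$. The point is that because the substitution is applied letter-by-letter and independently, when forming a realisation of $\vartheta(v^\ast)$ we are free to choose, for the letters of $v^\ast$ that correspond to the subword $v$, exactly the realisations $x^{(N_1)}, \ldots, x^{(N_2)}$. The resulting word is a realisation of $\vartheta^{k+1}(a_j)$ containing $x^{(N_1)} \cdots x^{(N_2)}$, hence containing $u$. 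Therefore $u \blacktriangleleft \vartheta^{k+1}(a_j)$, so $u \in \mc L_{\vartheta}$.

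Since $u$ was an arbitrary subword of $w'$, we conclude $\mc L(w') \subseteq \mc L_{\vartheta}$, i.e.\ $w' \in X_{\vartheta}$. The only mild obstacle is the bookkeeping in the final step, ensuring that the choice of realisations for $v^\ast$ under one more application of $\vartheta$ is genuinely independent across letters and can therefore be made to agree with the prescribed $x^{(i)}$; this is immediate from the definition of a random substitution acting on a word but deserves to be stated explicitly.
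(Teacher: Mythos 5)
Your proof is correct and follows essentially the same route as the paper's: locate the subword $u$ of the realisation inside the image of a finite subword $v$ of $w$, use legality of $v$ to find $k$ and $a_j$ with $v \blacktriangleleft \vartheta^k(a_j)$, and conclude $u \blacktriangleleft \vartheta^{k+1}(a_j)$. You merely spell out the bookkeeping (the choice of realisations $x^{(N_1)},\ldots,x^{(N_2)}$ and their compatibility with a realisation of $\vartheta^{k+1}(a_j)$) that the paper leaves implicit.
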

\begin{proof}
Let $w \in X_\vartheta$. Suppose $u \in \mc A^\ast$ is a subword of a realisation $\hat{w} \overset{\bullet}{=} \vartheta(w)$. It follows that there exists a subword $v$ of $w$ such that $u \blacktriangleleft \vartheta(v)$. As $v$ is a subword of $w$ and $w\in X_\vartheta$, there exists a natural number $k \in \N$ and a letter $a\in \mc A$ such that $v \blacktriangleleft \vartheta^k(a)$. It follows that $u \blacktriangleleft \vartheta^{k+1}(a)$. As $u$ was chosen to be an arbitrary subword of $\hat{w}$, it follows that $\hat{w} \in X_\vartheta$.
\end{proof}

It is a classic result in the deterministic setting \cite[Prop. 5.3]{qu} that if $\dsub$ is a primitive deterministic substitution and $w_0$ is a fixed point of some power of $\dsub$, one 
can alternatively define the subshift by
\[
X_{\dsub} := \overline{\{S^k w_0 \mid k \in \Z\}},
\]
where here we let $\overline{A} \subseteq X$ denote the closure of the subset $A$ in the space $X$. Even more, we have $X_{\dsub}=\overline{\{S^k w \mid k\in\Z\}}$ for every $w \in X_{\dsub}$. In the random situation, there is no direct analogue of a fixed point. However, one does have the following result of a similar flavour.

\begin{prop}
Let $\vartheta$ be a primitive random substitution with RS-subshift $X_\vartheta$. Let $w$ be any element of $X_\vartheta$. Then
\[
X_\vartheta = \overline{\{S^k(x) \mid x \overset{\bullet}{=} \vartheta^n(w), k \in \Z, n \geq 0 \}}
\]
where $\vartheta^n$ is understood to range over all possible realisations.
\end{prop}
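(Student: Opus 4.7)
The plan is to prove both inclusions separately. The inclusion $\supseteq$ follows immediately from Lemma \ref{lem:closed}: any realisation $x \overset{\bullet}{=} \vartheta^n(w)$ lies in $X_\vartheta$ (by iterating the lemma $n$ times), the subshift is shift-invariant, and it is closed in $\mc A^\Z$, so the closure of the orbit set under shift and substitution is contained in $X_\vartheta$.

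For the inclusion $\subseteq$, I would fix an arbitrary $y \in X_\vartheta$ and show that for every $\ell \in \N$ there exist $k \in \Z$ and $n \geq 0$ together with a realisation $x \overset{\bullet}{=} \vartheta^n(w)$ such that $S^k(x)_{[-\ell,\ell]} = y_{[-\ell,\ell]}$, which gives convergence in the product topology as $\ell \to \infty$. Set $u := y_{[-\ell,\ell]}$. Since $y \in X_\vartheta$, the word $u$ is $\vartheta$-legal, so there exist $m \in \N$ and a letter $a \in \mc A$ with $u \blacktriangleleft \vartheta^m(a)$. Choose any letter $b$ appearing in $w$ (using nonemptiness of $X_\vartheta$, so $w$ actually exists and contains letters). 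By primitivity, there exists $N \in \N$, independent of $b$, such that $a \blacktriangleleft \vartheta^N(b)$.

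The key step is now to concatenate these realisations through iterated substitution. From $a \blacktriangleleft \vartheta^N(b)$, some realisation of $\vartheta^N(b)$ has the form $x_1 \, a \, x_2$. Realise $\vartheta^{m+N}(b)$ by first using this realisation of $\vartheta^N(b)$ and then applying $\vartheta^m$ letter-by-letter, choosing the realisation of $\vartheta^m(a)$ in which $u$ appears as a subword. The outcome is a realisation of $\vartheta^{m+N}(b)$ containing $u$, so $u \blacktriangleleft \vartheta^{m+N}(b)$. Since $b$ appears in $w$ at some position, we can extend this to a realisation $x \overset{\bullet}{=} \vartheta^{m+N}(w)$ in which $u$ appears as a finite subword (every other position in $w$ may be realised arbitrarily). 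Finally, choose $k \in \Z$ so that the occurrence of $u$ in $x$ is translated to the index range $[-\ell, \ell]$; then $S^k(x)_{[-\ell,\ell]} = u = y_{[-\ell,\ell]}$, as required.

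The main obstacle is purely bookkeeping: one must be careful that the $\blacktriangleleft$ relation composes under iteration, i.e.\ that realisations of $\vartheta^{m+N}$ can genuinely be built by composing independently chosen realisations of $\vartheta^N$ and then $\vartheta^m$. This is inherent to the definition of iterated random substitution, but it deserves explicit mention. Everything else (the use of primitivity, the passage to a bi-infinite realisation of $\vartheta^n(w)$, and the shift to centre $u$) is routine once this is clear.
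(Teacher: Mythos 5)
Your proof is correct and follows essentially the same route as the paper: the $\supseteq$ inclusion via Lemma \ref{lem:closed} and shift-invariance, and the $\subseteq$ inclusion by approximating an arbitrary $y$ with realisations of $\vartheta^n(w)$ that contain the central block $y_{[-\ell,\ell]}$, suitably shifted. Your explicit two-step justification (legality gives $u \blacktriangleleft \vartheta^m(a)$, primitivity gives $a \blacktriangleleft \vartheta^N(b)$ for a letter $b$ of $w$, and realisations compose under iteration) merely spells out what the paper compresses into ``by primitivity, and the fact that $x_{[-\ell,\ell]}$ is a legal word.''
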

\begin{proof}
Let $w \in X_\vartheta$ be fixed.
Let $A = \{S^k(x) \mid x \overset{\bullet}{=} \vartheta^n(w), k \in \Z, n \geq 0 \}$.
We first show the left-to-right inclusion. Let $x \in X_\vartheta$ be an arbitrary element. By primitivity, and the fact that $x_{[-\ell,\ell]}$ is a legal word, let $n_\ell$ be the least natural number such that $x_{[-\ell,\ell]} \blacktriangleleft \vartheta^{n_\ell}(w_0)$. Then there exists a $k_\ell$ such that $S^{k_\ell}(\vartheta^{n_\ell}(w))_{[-\ell,\ell]} \overset{\bullet}{=} x_{[-\ell,\ell]}$. Hence, there exists a realisation $x^{(\ell)}$ of $S^{k_\ell}(\vartheta^{n_\ell}(w))$ such that $x^{(\ell)}_{[-\ell,\ell]} = x_{[-\ell,\ell]}$ and clearly $x^{(\ell)} \in A$. By construction, $\lim_{\ell \to \infty} x^{(\ell)} = x$ and so $x \in \overline{A}$. It follows that $X_\vartheta \subseteq \mc \overline{A}$.

The right-to-left inclusion is immediate as Lemma \ref{lem:closed} and the shift-invariance of $X_\vartheta$ automatically gives us $A \subseteq X_\vartheta$ and so
\[
\overline{A} \subseteq \overline{X_\vartheta} = X_\vartheta
\] 
by compactness of $X_\vartheta$. It follows that $X_\vartheta = \overline{A}$.
\end{proof}
Let 
\[
r\colon \Z \to (\mc A^+)^n,\quad \ \ i\mapsto \big(\vartheta^{(i)}(a_1),\ldots,\vartheta^{(i)}(a_n) \big) 
\]
be one particular bi-infinite realisation of the random substitution $\vartheta$, where $\vartheta^{(i)}$ denotes the realisation at index $i$. Then, we define the map
\[
\vartheta[r]\colon \mc A^{\Z} \to \mc A^{\Z}, \quad \ \ w=(w_i)_{i\in\Z} \mapsto \big( \vartheta^{(i)}(w_i) \big)_{i \in \Z}.
\] 
The continuity of $\vartheta[r]$ follows as in the deterministic case. Now, let $w \in X_{\vartheta}$ be arbitrary. We want to show that elements have pre-images with respect to the random substitution in the RS-subshift. Finding pre-images relies on the existence of a particular choice of a concrete realisation which means that we can make use of the continuity of $\vartheta[r]$.
\begin{lemma}\label{lem:pre-images}
Let $\vartheta$ be a primitive random substitution. If $w \in X_\vartheta$, then there exists an element $w' \in X_{\vartheta}$ and $n \geq 0$ with $w \overset{\bullet}{=} S^n(\vartheta(w'))$.
\end{lemma}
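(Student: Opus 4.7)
The plan is to construct $w'$ as a bi-infinite limit of finite $\vartheta$-pre-images of growing central windows of $w$. For each $\ell\in\N$, the word $w_{[-\ell,\ell]}$ lies in $\mc L_\vartheta$, so (after inflating by a further power if necessary to ensure the exponent is at least $1$) we may pick $k_\ell\geq 1$ and $a^{(\ell)}\in\mc A$ with $w_{[-\ell,\ell]}\blacktriangleleft \vartheta^{k_\ell}(a^{(\ell)})$. Writing this as $\vartheta(\vartheta^{k_\ell-1}(a^{(\ell)}))$ in terms of a concrete realisation, I would extract a word $u^{(\ell)}\in\mc L_\vartheta$ (a realisation of $\vartheta^{k_\ell-1}(a^{(\ell)})$), together with a chosen realisation $\tilde{u}^{(\ell)}\overset{\bullet}{=}\vartheta(u^{(\ell)})$ obtained by selecting a specific image $v^{(\ell)}_j\overset{\bullet}{=}\vartheta(u^{(\ell)}_j)$ for every index $j$, such that $w_{[-\ell,\ell]}$ appears as a subword of $\tilde{u}^{(\ell)}$.

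I would then centre the indexing so that the \emph{pivot} letter $u^{(\ell)}_0$ is the one whose image $v^{(\ell)}_0$ contains position $0$ of $w$, and record the offset $c_\ell\in\{0,\ldots,L-1\}$ at which $w_0$ sits inside $v^{(\ell)}_0$, where $L:=\max_{i,q}|w^{(i,q)}|$. Because $w_{[-\ell,\ell]}$ embeds into $\tilde{u}^{(\ell)}$, the pivot must have at least $\lceil \ell/L\rceil$ letters of $u^{(\ell)}$ on either side, and the preceding proposition (since $X_\vartheta\neq\emptyset$, some realisation has length $\geq 2$) guarantees we can arrange $|u^{(\ell)}|\to\infty$, so this bound really grows.

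The main compactness step is a standard diagonal extraction. Since $\mc A$ is finite, each $\vartheta(a_i)$ has finitely many realisations, and the $c_\ell$ take only finitely many values, one obtains a subsequence along which $c_\ell$ is constantly some $c$ and along which, for each fixed $j\in\Z$, both $u^{(\ell)}_j$ and the chosen $v^{(\ell)}_j$ eventually stabilise. Setting $w'_j$ to be the stabilised letter yields $w'\in\mc A^\Z$, and choosing $r$ to be any bi-infinite realisation compatible with the stabilised $v^{(\ell)}_j$ at every coordinate, the continuity of $\vartheta[r]$ recorded just before the lemma forces $\vartheta[r](w')=S^{-c}(w)$, i.e.\ $w\overset{\bullet}{=}S^{c}(\vartheta(w'))$ with $n:=c\geq 0$. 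That $w'\in X_\vartheta$ is immediate, since every finite subword of $w'$ appears in some $u^{(\ell)}$ and is therefore $\vartheta$-legal.

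The main subtlety will be ensuring that the diagonal limit really is two-sided rather than one-sided, which is exactly what the $\lceil\ell/L\rceil$ bound above controls: without it, the pivot could drift to the edge of $u^{(\ell)}$ and the extracted $w'$ would only be semi-infinite. Everything beyond that is bookkeeping.
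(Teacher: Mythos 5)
Your proof is correct, and it reaches the conclusion by a route that differs from the paper's in one structural respect. The paper also starts from $w_{[-\ell,\ell]}\blacktriangleleft\vartheta^{k}(a_\ell)$ and peels off one application of $\vartheta$, but it then immediately promotes the finite pre-image $v_\ell\overset{\bullet}{=}\vartheta^{k-1}(a_\ell)$ to a bi-infinite element of $X_\vartheta$: using primitivity it picks $\hat w^{(\ell)}\in X_\vartheta$ with $\hat w^{(\ell)}_0=a_\ell$, substitutes and shifts so that $v_\ell$ sits centrally, so the candidate pre-images $w^{(\ell)}$ are already points of the compact space $X_\vartheta$; sequential compactness then hands over a limit $w^{(\infty)}\in X_\vartheta$ with no further work, and only the boundedness of the shift offsets $n_\ell$ (your $c_\ell$) needs a pigeonhole. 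You instead stay with the finite words $u^{(\ell)}\overset{\bullet}{=}\vartheta^{k_\ell-1}(a^{(\ell)})$ and run a diagonal extraction on the letters and chosen realisations coordinate by coordinate. This is more elementary---no embedding into $X_\vartheta$ and no appeal to compactness of the subshift---but it obliges you to supply exactly the two things the paper gets for free: that the limit is genuinely two-sided (your covering estimate that the pivot has order $\ell/L$ letters on each side is the right argument, though the count is really at least $\lceil(\ell-L+1)/L\rceil$ per side once the pivot's own image is discounted, and the appeal to the non-emptiness proposition is superfluous since this covering bound already forces $|u^{(\ell)}|\to\infty$), and that the limit is legal (immediate, as you say, from every finite subword of $w'$ lying in some $u^{(\ell)}\overset{\bullet}{=}\vartheta^{k_\ell-1}(a^{(\ell)})$). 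Both proofs share the essential mechanism: stabilising the chosen realisations on larger and larger blocks around the origin and pigeonholing the bounded offset $c_\ell\in\{0,\dots,L-1\}$ to obtain a single $n\geq 0$.
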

\begin{proof}
Recall that all finite subwords of $w$ are $\vartheta$-legal. This means that there is a $k \in \N$ such that for all $\ell \in \N_0$ the centrally positioned subword $w_{[-\ell,\ell]}$ of $w$ is a subword of $\vartheta^k(a_{\ell})$ for some $a_{\ell} \in \mc A$ and we choose $k$ minimal with this property. This implies that there is some image $v_{\ell} \overset{\bullet}{=} \vartheta^{k-1}(a_{\ell})$ with $w_{[-\ell,\ell]} \blacktriangleleft \vartheta(v_{\ell})$ and we choose the indexing of $v_{\ell}$ such that $S^{n_\ell}(\vartheta(v_{\ell}))$ has its subword $w_{[-\ell,\ell]}$ centrally positioned around the reference point for the minimal non-negative choice of the shift index $n_\ell \geq 0$. Now, let $\hat{w}^{(\ell)}$ be an element of $X_\vartheta$ such that $\hat{w}^{(\ell)}_0 = a_{\ell}$---this is certainly possible for primitive $\vartheta$. Then there again exists a shift $S^{m_\ell}$ of a realisation of $\vartheta^{k-1}(\hat{w}^{(\ell)})$ such that $w^{(\ell)} \overset{\bullet}{=} S^{m_\ell}(\vartheta^{k-1}(\hat{w}^{(\ell)})) \in X_\vartheta$ contains a centrally positioned copy of the word $v_\ell$ with the necessary indexing as described. In particular, $S^{n_\ell}(\vartheta[r_\ell](w^{(\ell)}))_{[-\ell,\ell]} = w_{[-\ell,\ell]}$ for a particular realisation $\vartheta[r_\ell]$. As the possible lengths of the words $\vartheta(a_i)$ are bounded, the possible values of the minimal non-negative shift indices $n_\ell$ are also bounded, and so we can pick an infinite subsequence of the lengths $\ell(j)$ such that $n_{\ell(j)} = n$ is constant.

Due to the compactness of $X_{\vartheta}$, there is a further subsequence $(\ell(j)_i)_{i\in\N}$ such that the limit
$w^{(\infty)} := \lim_{i\to\infty} w^{(\ell(j)_i)}$ exists. Note that, by construction, any two realisations $\vartheta[r_{\ell(j)_i}]$ and $\vartheta[r_{\ell(j)_{i+m}}]$ agree with each other on all blocks of radius $\ell(j)_i$ around the $0$th index, and so there is a natural limit realisation of the random substitution $\vartheta[r]$, given by letting $\ell(j)_i \to \infty$, which can be used in place of each of the realisations $\vartheta[r_{\ell(j)_i}]$. Putting things together yields
\[
w = \lim_{i\to\infty} S^n(\vartheta[r] (w^{(\ell(j)_i)})) = S^n(\vartheta[r](w^{(\infty)}))
\]
by continuity of the shift and $\vartheta[r]$.
\end{proof}
As one can quickly see, the pre-image of any such $w$ is in general not uniquely defined. This is a further difference to the purely deterministic primitive case where this uniqueness holds for aperiodic subshifts by the celebrated result of Moss\'{e} \cite{mos} who showed the equivalence of aperiodicity with the property known as \emph{unique recognisability}; we refer to \cite[Sec. 5.5.2]{qu} and \cite[Sec. 7.2.1]{fog} for background.

\section{Dynamics and Topology}\label{sec:dynamics}
In the deterministic setting it is immediate that the subshift $(X_{\dsub},S)$ of a primitive
substitution $\dsub$ contains dense orbits. Even more, the subshift is minimal; see
\cite[Prop. 5.5]{qu}. We will now show that the first statement remains true for random
substitution, while the second, in general, is false.

\begin{prop} \label{prop:top_tran}
Let $\vartheta$ be a primitive random substitution. The RS-subshift $(X_{\vartheta},S)$ contains an element with dense shift-orbit in $X_\vartheta$.
\end{prop}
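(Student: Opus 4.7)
The plan is to construct explicitly an element $y \in X_\vartheta$ in which every word of $\mc L(X_\vartheta)$ appears as a subword. Since the product topology on $\mc A^\Z$ makes basic neighbourhoods correspond to matching a prescribed central block, such a $y$ automatically has dense shift-orbit in $X_\vartheta$, and $\mc L(X_\vartheta)$ is countable because $\mc A$ is finite.

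The crux is an \emph{amalgamation lemma}: for any $u, v \in \mc L(X_\vartheta)$ there is a legal word containing both $u$ and $v$ as subwords. From $u \blacktriangleleft \vartheta^{k_u}(a_u)$ and $v \blacktriangleleft \vartheta^{k_v}(a_v)$, primitivity lets me push the exponent up so that for every letter $b$ and a common $M$, some realisation of $\vartheta^M(b)$ contains $u$ and some (possibly different) realisation contains $v$. Since $X_\vartheta$ is non-empty, the preceding proposition together with primitivity ensures that, for $M$ sufficiently large, some realisation of $\vartheta^M(b)$ has length at least $2$. Applying $\vartheta^M$ once more to such a length-$\geq 2$ realisation and independently picking the realisations at two distinct letter positions --- one chosen to expand into a realisation containing $u$, the other $v$ --- produces a single realisation of $\vartheta^{2M}(b)$ that contains both, hence a legal word with the desired property.

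With amalgamation in hand, enumerate $\mc L(X_\vartheta) = \{u_1, u_2, \ldots\}$ and build $y$ diagonally. Pick $y_1 \in X_\vartheta$ so that a central window $(y_1)_{[-M_1, M_1]}$ contains $u_1$. Inductively, given $y_m$ whose central window $(y_m)_{[-M_m, M_m]}$ contains $u_1, \ldots, u_m$, apply the amalgamation lemma to this window and $u_{m+1}$ to obtain a legal word $W$ containing both. Every legal word embeds into an element of $X_\vartheta$: take $W \blacktriangleleft \vartheta^k(a)$, choose $z \in X_\vartheta$ with $a$ appearing at some position in $z$ (which exists by primitivity and non-emptiness of $X_\vartheta$), and apply $\vartheta^k$ letter-by-letter to $z$, choosing the realisation at $a$'s position so as to include $W$; by Lemma \ref{lem:closed} the result lies in $X_\vartheta$. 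Translate this element so that its central block on $[-M_m, M_m]$ equals that of $y_m$, call it $y_{m+1}$, and enlarge $M_{m+1}$ to cover the new occurrence of $u_{m+1}$.

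Because $M_m \to \infty$ and consecutive $y_m$'s agree on their central windows, the sequence is Cauchy in the compact space $X_\vartheta$, converging to some $y \in X_\vartheta$ whose central blocks inherit each $u_i$ as a subword. Every word of $\mc L(X_\vartheta)$ then appears in $y$, so the shift-orbit of $y$ is dense. I expect the amalgamation lemma to be the main obstacle: in the deterministic primitive case it is immediate, since any two legal words are subwords of a common $\vartheta^k(a)$, but in the random setting a single realisation of $\vartheta^k(a)$ need not simultaneously contain both $u$ and $v$, forcing a move to a higher power and careful use of the independence of realisations at distinct positions.
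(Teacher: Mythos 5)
Your proof is correct, but it takes a genuinely different route from the paper's. The paper fixes $w \in X_\vartheta$ and a letter $a_i$ occurring infinitely often in $w$, pulls back through Lemma \ref{lem:pre-images} to get elements $w^{(-k)}$ with $w \overset{\bullet}{=} S^{n_k}(\vartheta^k(w^{(-k)}))$, and then chooses realisations of $\vartheta^{k+1}(w^{(-k)})$ so that the occurrences of $a_i$ nearest the origin are mapped bijectively onto \emph{all} realisations of $\vartheta^k(a_i)$, while successive stages agree on growing central blocks; the limit (along a subsequence) then contains every legal word, since every legal word of length $\ell$ sits inside some realisation of $\vartheta^k(a_i)$ for $k$ large. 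You instead prove an amalgamation lemma --- any two legal words co-occur in a single realisation of $\vartheta^{2M}(b)$, obtained by substituting a length-$\geq 2$ realisation of $\vartheta^M(b)$ and choosing the realisations at two distinct positions independently --- and then run a diagonal enumeration of the countable language. Both arguments hinge on the same random-substitution-specific fact, namely the independence of realisation choices at distinct letter positions, but yours dispenses with Lemma \ref{lem:pre-images} entirely and pins each word in a fixed central window, so the sequence $(y_m)$ converges outright without extracting a subsequence or worrying about words being pushed to infinity. What you lose is the by-product the paper cares about: their $w^{(\infty)}$ is built by iterated substitution of pre-images and serves as the ``analogue of a fixed point'' referred to after the proof, whereas your $y$ has no such self-similar structure. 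Two cosmetic points: you should force $M_{m+1} > M_m$ explicitly to guarantee convergence (though this is automatic once the enumerated words have unbounded length), and the existence of $z \in X_\vartheta$ containing the letter $a$ deserves the one-line justification that any $z' \in X_\vartheta$ has a realisation of $\vartheta^K(z')$ containing $a$, which lies in $X_\vartheta$ by Lemma \ref{lem:closed}.
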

\begin{proof} 
Let $w \in X_\vartheta$ and let $a_i \in \mc A$ be a letter which appears infinitely often in $w$. As $\mc A$ is finite and $w$ has infinitely many entries, such an $a_i$ must exist. Let $\mc G_k := \{u \in \mc A^+ \mid u \overset{\bullet}{=}\vartheta^{k}(a_i)\}$.
For all $k \geq 1$, let $w^{(-k)}$ be an element in $X_\vartheta$ such that there exists an ${n_k} \geq 0$ with $w \overset{\bullet}{=} S^{n_k}(\vartheta^k(w^{(-k)})$. Such a sequence of elements exists by a repeated application of Lemma \ref{lem:pre-images}.

 Define the sequence $(w^{(k)})_{k \in \N_0}$ by $w^{(0)} = \vartheta(w)$ and $w^{(k)} \overset{\bullet}{=} \vartheta^{k+1}(w^{(-k)})$ such that the realisation of $\vartheta^{k+1}$ is chosen so that the first $|\mc G_k|$ $a_i$s, closest to the origin in $\vartheta^{k}(w^{(-k)})$, are bijectively mapped by $\vartheta^k$ onto the words in $\mc G_k$ and there exists an $r_k \geq 0$ such that the words $w^{(k)}_{[-r_k,r_k]}$ and $w^{(k-1)}_{[-r_k,r_k]}$ agree, and contain all words in $\mc G_k$. This is possible because of how the elements $w^{(-k)}$ are defined and our ability to choose the realisations of the substitution carefully. By Lemma \ref{lem:closed}, $w^{(k)} \in X_{\vartheta}$ for all $k \geq 0$. Furthermore, due to the primitivity of $\vartheta$, we find that for every $\ell \in \N$ there is a $k \in \N$ such that for every $u \in \mc L_{\vartheta}^{\ell}$, there exists a $v \in \mc G_k$ such that $u \triangleleft v$. In particular, for every $\ell \in \N$ there is a $k \geq 0$ such that $\mc L_{\vartheta}^{\ell} \subseteq \mc L(w^{(k)})$. Since $X_{\vartheta}$ is compact, there is a subsequence $(w^{(k_i)})_{i \in \N}$ converging towards a word $w^{(\infty)} \in X_{\vartheta}$. Also, as the words $w^{(k)}$ agree on larger and large patches around the origin as $k$ grows, it means that $\mc L_{\vartheta}^{\ell} \subseteq \mc L(w^{(\infty)})$ for every $\ell \in \N$ (that is, no legal words are `pushed to infinity' in the limit). This implies
\[
\overline{\{S^kw^{(\infty)} \mid k\in\Z\}} = X_{\vartheta}.
\]
\end{proof}

The bi-infinite word $w^\infty$ constructed in the proof of Proposition \ref{prop:top_tran} is somewhat of an analogue to a fixed point, but for the random setting.

To see that $(X_{\vartheta},S)$ is not necessarily minimal (although $\vartheta$ is primitive), consider the following random substitution first studied by Godr\`{e}che and Luck \cite{gl}. Let $\mc A=\{a,b\}$, $\boldsymbol{p_1}=(p,q)$ and $\boldsymbol{p_2}=
(1)$. The \emph{random Fibonacci substitution} $\sigma$ is defined by
\[
  \sigma\colon \; 
  \begin{cases}
      a\mapsto 
      \begin{cases}
      ba, & \text{with probability } p,      \\
      ab, & \text{with probability } q,
      \end{cases} \\
      b\mapsto a .
  \end{cases} 
\] 
If we denote by $X_{\text{Fib}}$ the well-known minimal Fibonacci subshift, it is not difficult to see
that $X_{\text{Fib}}$ is a proper subset of $X_{\sigma}$ (there are elements in $X_{\sigma}$ 
which have $bb$ as a subword). Hence, $(X_{\sigma},S)$ cannot be minimal.

It would be interesting to know whether there are sufficient or necessary conditions for $(X_{\vartheta},S)$ to be minimal. We shall address this question in Section \ref{sec:ergodic}.

\begin{definition}
For a continuous dynamical system $X = (X,f)$, let
\[
\operatorname{Per}(X) = \{x \in X \mid \exists n > 0, \text{ such that } f^n(x) = x\}
\] denote the set of periodic points of $X$ under the action of $f$.
\end{definition}
The following is a surprising dichotomy result for periodic points in RS-subshifts.
\begin{prop}\label{prop:periodic}
Let $\vartheta$ be a primitive random substitution with RS-subshift $(X_\vartheta,S)$. Either $\operatorname{Per}(X_\vartheta)$ is empty or $\operatorname{Per}(X_\vartheta)\subseteq X_\vartheta$ is dense.
\end{prop}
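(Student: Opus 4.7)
The dichotomy has a trivial direction (the empty case), so the plan is to show that if $\operatorname{Per}(X_\vartheta)$ is nonempty then it is dense in $X_\vartheta$. The central observation I will exploit is that a \emph{deterministic} realisation of some power $\vartheta^M$---one where each letter $a_i \in \mc A$ is assigned a single fixed image $v_{a_i} \overset{\bullet}{=} \vartheta^M(a_i)$ used for every occurrence---preserves shift-periodicity: if $p$ is the bi-infinite repetition of a word $u$, then applying such a deterministic realisation to $p$ produces the bi-infinite repetition of the concatenation of the $v_{a_i}$'s in the pattern of $u$, and hence is again periodic. By Lemma \ref{lem:closed}, the result stays in $X_\vartheta$.

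Fix a periodic point $p \in X_\vartheta$ of shift-period $n$ and write $u = p_{[0,n-1]}$. To approximate an arbitrary $x \in X_\vartheta$ within radius $\ell$, I will produce a periodic point in $X_\vartheta$ that agrees with $x$ on the central window $[-\ell,\ell]$. Since $x_{[-\ell,\ell]}$ is $\vartheta$-legal, it sits inside some realisation of $\vartheta^k(a_j)$ for an appropriate $k$ and letter $a_j$. Pick any letter $a$ appearing in $u$; by primitivity of $\vartheta$ there exist $N$ and a realisation $w \overset{\bullet}{=} \vartheta^N(a)$ containing an occurrence of $a_j$. Substituting the chosen realisation of $\vartheta^k(a_j)$ (the one containing $x_{[-\ell,\ell]}$) for that occurrence, and choosing arbitrary realisations of $\vartheta^k$ on the other letters of $w$, yields a single legitimate realisation $v_a \overset{\bullet}{=} \vartheta^{k+N}(a)$ which contains $x_{[-\ell,\ell]}$ as a subword. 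Set $M := k+N$.

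Now choose arbitrary realisations $v_{a_i} \overset{\bullet}{=} \vartheta^M(a_i)$ for the remaining letters and assemble them into a deterministic-in-letters realisation $\vartheta^M[r]$. The image $\tilde p := \vartheta^M[r](p)$ lies in $X_\vartheta$ by Lemma \ref{lem:closed} and is periodic with period $|\vartheta^M[r](u)|$. Since $a$ occurs in $u$, the word $v_a$, and hence $x_{[-\ell,\ell]}$, appears at some position in $\tilde p$; an appropriate shift $S^m(\tilde p)$ is then a periodic point in $X_\vartheta$ whose central radius-$\ell$ window coincides with $x_{[-\ell,\ell]}$. As $x$ and $\ell$ were arbitrary, density follows.

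The main obstacle I anticipate is purely bookkeeping: one must verify that composing the inner and outer realisations really does produce a single legitimate realisation of $\vartheta^{k+N}(a)$ (rather than a ``mixed'' object with no valid interpretation as $\vartheta^M$ applied to a single letter), and one must carefully track the offset needed to centre $x_{[-\ell,\ell]}$. The conceptual core---that deterministic-in-letters choices of realisation turn periodic points into periodic points---is essentially costless once noticed, and this is what distinguishes the argument from the more delicate construction of a transitive point in Proposition \ref{prop:top_tran}.
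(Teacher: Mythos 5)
Your proposal is correct and follows essentially the same route as the paper: both fix a periodic point, apply a power of $\vartheta$ realised consistently across the period (so that periodicity is preserved), arrange by primitivity that the image of one letter of the periodic block contains the target window $x_{[-\ell,\ell]}$, and then shift to centre it. The paper phrases the consistency condition as ``every periodic block is substituted identically'' rather than per-letter, but this is the same mechanism.
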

\begin{proof}Suppose that $\operatorname{Per}(X_\vartheta)$ is non-empty. Let $w^{(0)}$ be a periodic point of $X_\vartheta$ with prime period $p \geq 1$. So $S^{p}(w^{(0)}) = w^{(0)}$ and $S^i(w^{(0)}) \neq w^{(0)}$ for all $0 < i < p$. 

Let $w \in X_\vartheta$. Let $a = w^{(0)}_0 \in \mc A$ be fixed. By primitivity, for every finite subword $u \triangleleft w$, there exists a natural number $n \geq 0$ such that $u \blacktriangleleft \vartheta^n(a)$. Let $n(u)$ be the minimal such $n$. For all $\ell \geq 1$, we have $w_{[-\ell,\ell]} \blacktriangleleft \vartheta^{n(w_{[-\ell,\ell]})}(a)$. In particular, we have
\[
w_{[-\ell,\ell]} \blacktriangleleft \vartheta^{n(w_{[-\ell,\ell]})}(w^{(0)}_0 \cdots w^{(0)}_p)
\]
and so defining $\tilde{w}^{(\ell)} := \vartheta^{n(w_{[-\ell,\ell]})}(w^{(0)})$ (in such a way that every periodic block is substituted identically) gives us $w_{[-\ell,\ell]} \blacktriangleleft \tilde{w}^{(\ell)}$.

There exist finite shifts of the sequence $\tilde{w}^{(\ell)}$ such that $S^i(\tilde{w}^{(\ell)})_{[-\ell,\ell]} = w_{[-\ell,\ell]}$. Let $w^{(\ell)}$ be the required shifted sequences. It follows that $\lim_{\ell \to \infty}(w^{(\ell)}) = w$. By construction, for all $\ell \geq 0$, the sequence $w^{(\ell)}$ is legal and periodic and so it follows that $\operatorname{Per}(X_\vartheta)$ is a dense subset of $X_\vartheta$.
\end{proof}

There exist examples exhibiting both kinds of behaviour. Any primitive aperiodic deterministic substitution is periodic-point free. Moreover, the random Fibonacci substitution introduced above has no periodic points as the relative frequencies of $a$s to $b$s is irrationally related for every element of the RS-subshift. On the other hand, as we shall see in Section \ref{sec:examples}, there exist examples of primitive random substitutions $\vartheta$ whose RS-subshift $X_\vartheta$ has $\operatorname{Per}(X_\vartheta)$ contained as a proper dense subset.

Periodic elements form a special subclass of a more general family of bi-infinite sequences; the linearly repetitive sequences.

\begin{definition}
We say that a sequence $w$ is \emph{linearly repetitive} if there exists a real number $L \geq 1$ such that for all $n \geq 1$, we have $u \in \mc L^n(w)$ and $v \in \mc L^{Ln}(w)$ implies that $u \triangleleft v$. If $w$ is a linearly repetitive sequence then we call the subshift $X_w$ \emph{linearly recurrent}.
\end{definition}
Clearly every periodic sequence with prime period $p$ is linearly repetitive with $L = p$.
It is well-known \cite{dhs} that if $\dsub$ is a primitive deterministic substitution then every $w \in X_{\dsub}$ is linearly repetitive and hence $X_{\dsub}$ is linearly recurrent. Let $\operatorname{Lin}(X_\vartheta) \subseteq X_\vartheta$ denote the set of linearly repetitive elements of $X_\vartheta$.

Although not every primitive RS-subshift contains periodic elements, they all contain linearly repetitive elements. Moreover, we have the following density result for $\operatorname{Lin}(X_\vartheta)$.

\begin{prop}\label{prop:lin}
Let $\vartheta$ be a primitive random substitution with RS-subshift $X_\vartheta$. The set $\operatorname{Lin}(X_\vartheta) \subseteq X_\vartheta$ is dense.
\end{prop}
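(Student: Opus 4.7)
My plan is to exhibit, for every $w \in X_\vartheta$ and every $\ell \geq 1$, a linearly repetitive element $w' \in X_\vartheta$ with $w'_{[-\ell,\ell]} = w_{[-\ell,\ell]}$. The strategy is to build a primitive \emph{deterministic} substitution $\vartheta'$ with $X_{\vartheta'} \subseteq X_\vartheta$ and $u := w_{[-\ell,\ell]} \in \mc L_{\vartheta'}$. The classical result that the subshift of a primitive deterministic substitution is minimal and linearly recurrent (see \cite{dhs}) then guarantees that every element of $X_{\vartheta'}$ is linearly repetitive; by minimality $u$ lies in the language of every such element, so an appropriate shift of any $w^* \in X_{\vartheta'}$ produces a linearly repetitive member of $X_\vartheta$ that agrees with $w$ on $[-\ell,\ell]$. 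Letting $\ell \to \infty$ yields density.

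To construct $\vartheta'$, I would proceed as follows. Since $u$ is $\vartheta$-legal, there exist $M \geq 0$ and a letter $a \in \mc A$ with $u \blacktriangleleft \vartheta^M(a)$. By primitivity, there is $K \geq 0$ such that for every $a_i \in \mc A$ some realisation of $\vartheta^K(a_i)$ contains the letter $a$. Setting $N = M + K$, for each $a_i$ I would select a realisation $u_i \overset{\bullet}{=} \vartheta^N(a_i)$ by first fixing the first $K$ layers of the substitution tree so that a specified leaf at depth $K$ carries the letter $a$, then expanding that leaf via a realisation of $\vartheta^M$ containing $u$; this forces $u \triangleleft u_i$ for each $i$. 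Enlarging $N$ if necessary and exploiting the remaining freedom in the substitution tree, one can additionally arrange that every letter of $\mc A$ appears in each $u_i$, so that the substitution matrix of $\vartheta' \colon a_i \mapsto u_i$ is strictly positive and $\vartheta'$ is primitive. By construction each iterate $\vartheta'^k(a_i)$ is a realisation of $\vartheta^{kN}(a_i)$, so $\mc L_{\vartheta'} \subseteq \mc L_\vartheta$ and hence $X_{\vartheta'} \subseteq X_\vartheta$, while $u \in \mc L_{\vartheta'}$ by choice of the $u_i$. The claim then follows from the classical theorem applied to $\vartheta'$.

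The principal technical obstacle is the simultaneous requirement that each $u_i$ contain both the prescribed subword $u$ \emph{and} every letter of $\mc A$. This is not automatic: even when $\vartheta$ is primitive, certain realisations of $\vartheta^N(a_i)$ may omit letters (for example, the primitive random substitution $a \mapsto \{ab, ba\},\ b \mapsto \{aa, bb\}$ admits the pure realisation $b^{2^N}$ of $\vartheta^N(b)$ for every $N$). The remedy I envisage is to note that the tree of realisations of $\vartheta^N$ has exponentially many degrees of freedom in $N$ while only finitely many constraints are imposed; for $N$ large enough, one can therefore allocate disjoint subtrees of the substitution tree to handle the two constraints separately, using primitivity on the ``free'' branches to ensure every letter appears without disturbing the branch that spells out $u$. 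This step should be written out carefully, but it is the only delicate point; the remainder is an immediate appeal to the deterministic theory.
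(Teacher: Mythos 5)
Your proposal follows essentially the same route as the paper: realise a suitable power of $\vartheta$ as a primitive deterministic substitution $\dsub$ whose language contains $w_{[-\ell,\ell]}$, observe that $X_{\dsub} \subseteq X_\vartheta$, and invoke linear repetitivity of primitive deterministic substitution subshifts to produce the approximating points. The ``delicate point'' you isolate --- that a realisation of $\vartheta^N(a_i)$ containing the prescribed word need not automatically contain every letter, hence need not be primitive --- is precisely the step the paper dispatches in a single clause (``There is then a primitive deterministic substitution $\dsub^{(\ell)}$\dots''), so your extra care there is a refinement of, not a departure from, the paper's argument.
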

\begin{proof}
Let $w \in X_\vartheta$. Let $a \in \mc A$ be a fixed letter. Let $n(\ell)$ be the minimal natural number such that $w_{[-\ell,\ell]} \blacktriangleleft \vartheta^{n(\ell)}(a)$ which exists by primitivity. There is then a primitive deterministic substitution $\dsub^{(\ell)} \colon \mc A \to \mc A^+$ such that $w_{[-\ell,\ell]} \triangleleft \dsub^{(\ell)}(a)$ which is a particular realisation of $\vartheta^{n(\ell)}$. In particular, $X_{\dsub^{(\ell)}} \subseteq X_\vartheta$, $X_{\dsub^{(\ell)}}$ is non-empty, and every element of $X_{\dsub^{(\ell)}}$ is linearly repetitive.

By construction, there exists a point $w^{(\ell)} \in X_{\dsub^{(\ell)}}$ such that $w^{(\ell)}_{[-\ell,\ell]} = w_{[-\ell,\ell]}$. It follows that $\lim_{\ell \to \infty} w^{(\ell)} = w$. Hence the set of linearly repetitive elements of $X_\vartheta$ is dense.
\end{proof}

Although Proposition \ref{prop:lin} only refers to the subset of linearly repetitive elements, the construction used in the proof shows that the set of substitutive sequences\footnote{By substitutive sequence, we mean an element of a subshift associated a primitive deterministic substitution.} is also dense in $X_\vartheta$. The result also trivially implies that the set of repetitive elements form a dense subset of $X_\vartheta$.

Recall that a topological space is \emph{perfect} if it contains no isolated points. For a point $x \in X$, we let $\mc O x$ denote the orbit of $x$ under the homeomorphism $f$. That is, $\mc O x = \{f^n(x) \mid n \in \Z\}$.

%

\begin{prop}\label{prop:cantor}
Let $\vartheta$ be a primitive random substitution. The RS-subshift $X_\vartheta$ is either finite or is homeomorphic to a Cantor set.
\end{prop}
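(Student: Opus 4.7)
The plan is to observe that $X_\vartheta \subseteq \mc A^{\Z}$ is automatically compact, metrizable, and totally disconnected (as a closed subspace of a Cantor space), and nonempty by the standing assumption of the paper. By the classical topological characterisation (Brouwer), a nonempty compact metrizable totally disconnected space is homeomorphic to the Cantor set as soon as it is perfect. The entire content of the proposition therefore reduces to a single implication I would aim to establish: if $X_\vartheta$ has any isolated point, then $X_\vartheta$ is in fact finite. Granting this, the contrapositive supplies the perfectness needed to conclude.

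To prove \emph{isolated $\Rightarrow$ finite}, suppose $x \in X_\vartheta$ is isolated. First I would invoke Proposition~\ref{prop:lin}: the set of linearly repetitive elements is dense in $X_\vartheta$, and since $\{x\}$ itself is open in $X_\vartheta$, it must meet this dense set, so $x$ is linearly repetitive. Next, because $X_\vartheta$ carries the subspace topology of the product topology on $\mc A^{\Z}$, isolatedness of $x$ lets me choose $n_0$ large enough that the cylinder $C := \{w \in X_\vartheta \mid w_{[-n_0,n_0]} = x_{[-n_0,n_0]}\}$ equals $\{x\}$. Writing $u = x_{[-n_0,n_0]}$, linear repetitivity of $x$ guarantees that $u$ occurs in $x$ along an unbounded increasing sequence of positions $k_1 < k_2 < \cdots$ with bounded gaps. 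Each such occurrence gives $S^{k_i}(x) \in C = \{x\}$, so $S^{k_i}(x) = x$, and $x$ is periodic under $S$ with finite orbit $\mc O x$.

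To close the argument, I would appeal to Proposition~\ref{prop:top_tran} for a point $y \in X_\vartheta$ whose shift-orbit $\mc O y$ is dense. Since $\{x\}$ is open, $\mc O y$ must meet it, forcing $x = S^k(y)$ for some $k \in \Z$, and hence $\mc O x = \mc O y$. This dense orbit is finite by the previous paragraph, and a finite subset of a Hausdorff space is closed, so $X_\vartheta = \overline{\mc O y} = \mc O x$ is finite, as required. The only step with any real substance is the implication \emph{isolated $+$ linearly repetitive $\Rightarrow$ periodic}: this is where the nontrivial input about the random system (density of linearly repetitive elements, Proposition~\ref{prop:lin}) has to be combined with the cylinder structure of isolation in a subshift. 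Everything else is routine density, compactness, and Hausdorffness bookkeeping.
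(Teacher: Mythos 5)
Your proof is correct, and it takes a genuinely different route from the paper's. Both arguments reduce to perfectness via Brouwer's characterisation and both feed on the same two inputs (a dense orbit from Proposition~\ref{prop:top_tran} and density of $\operatorname{Lin}(X_\vartheta)$ from Proposition~\ref{prop:lin}), but they combine them differently. The paper splits into cases according to whether the dense-orbit point $w_0$ is linearly repetitive: if it is, $X_\vartheta$ is linearly recurrent, hence minimal, hence equal to a deterministic primitive substitution subshift $X_{\dsub}$, and the dichotomy is then imported from the classical deterministic theory; if it is not, no point of $\mc O w_0$ can be isolated (its neighbourhoods meet $\operatorname{Lin}(X_\vartheta)$ but the point itself is not linearly repetitive), and points off $\mc O w_0$ are never isolated by density. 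You instead prove the single implication \emph{isolated $\Rightarrow$ finite} head-on: an isolated point $x$ must itself be linearly repetitive (as $\{x\}$ is open and meets the dense set $\operatorname{Lin}(X_\vartheta)$), the defining cylinder $\mc Z$ with $\mc Z \cap X_\vartheta = \{x\}$ then forces every bounded-gap recurrence of the central word to return the shift of $x$ into $\{x\}$, so $x$ is periodic, and the dense orbit collapses to the finite closed set $\mc O x$. Your route is self-contained and avoids the appeal to the external deterministic result as well as the minimality detour; the intermediate fact ``isolated and linearly repetitive implies periodic'' is a clean general statement about subshifts. What the paper's case split buys in exchange is the structural byproduct that in the minimal case $X_\vartheta$ actually coincides with a deterministic substitution subshift. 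The only bookkeeping you elide is the indexing between occurrences of $u = x_{[-n_0,n_0]}$ and the shifts $S^{k_i}(x)$ landing in the centred cylinder, which is harmless.
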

\begin{proof}
We use the fact that a topological space $X$ is homeomorphic to a Cantor set if and only if $X$ is a compact, perfect, totally disconnected, metrisable space. It is clear that $X_\vartheta$ is a compact metric space. A subspace of a totally disconnected space is totally disconnected, and so $X_\vartheta$ is totally disconnected because it is a subspace of $\mc A^\Z$. It only remains to show that $X_\vartheta$ is perfect.

From Proposition \ref{prop:top_tran} we know that $X_\vartheta$ contains an element $w_0$ such that $\mc O w_0$ is a dense orbit. For all $w \in X_\vartheta \setminus \mc O w_0$, the point $w$ cannot be isolated, as all neighbourhoods $U$ of $w$ intersect $\mc O w_0$; that is, $(U \setminus \{x\}) \cap \mc O w_0 \neq \emptyset$. The only other possibility is that $\mc O w_0$ contains isolated points.

Suppose $w_0$ is linearly repetitive. Then $X_\vartheta$ is linearly recurrent, hence minimal and there exists a primitive deterministic substitution $\dsub$ such that $X_\vartheta = X_\dsub$ (this follows from the proof of Proposition \ref{prop:lin}). If $\dsub$ is deterministic, then the result is well-known \cite[Proposition 4.5]{bible}, so we may assume that $w_0$ is not linearly repetitive. In particular, $X_\vartheta$ must be infinite.

Recall from Proposition \ref{prop:lin} that the set $\operatorname{Lin}(X_\vartheta)$ is a dense subset of $X_\vartheta$. In particular, for every point $w \in \mc O w_0$ and every neighbourhood $U$ of $w$, we have $U \cap \operatorname{Lin}(X_\vartheta) \neq \emptyset$, and as $w \notin \operatorname{Lin}(X_\vartheta)$, we can conclude that $w$ is not isolated. It follows that $X_\vartheta$ is perfect and hence is homeomorphic to a Cantor set.
\end{proof}
Recall that a topological dynamical system $(X,f)$ is \emph{topologically transitive} if for all open sets $U,V\subseteq X$, there exists a natural number $n \geq 0$ such that $f^n(U) \cap V \neq \varnothing$. If $X$ is either a Cantor set or a finite discrete space, then $(X,f)$ is topologically transitive if and only if $(X,f)$ admits a dense orbit (this does not hold in general \cite{dk}). Hence, transitivity of primitive RS-subshifts follows from a simple application of Proposition \ref{prop:top_tran} and Proposition \ref{prop:cantor}.
\begin{coro}\label{cor:top_tran}
Let $\vartheta$ be a primitive random substitution. The RS-subshift $(X_\vartheta,S)$ is topologically transitive.
\end{coro}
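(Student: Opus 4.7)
The plan is to combine the two preceding propositions exactly as the paragraph introducing the corollary suggests. By Proposition \ref{prop:top_tran}, the RS-subshift $X_\vartheta$ contains a point whose shift-orbit is dense, and by Proposition \ref{prop:cantor} the underlying space $X_\vartheta$ is either a Cantor set or a finite discrete space. So the corollary reduces to the general fact that, in either of these two topological categories, a dynamical system admits a dense orbit if and only if it is topologically transitive.

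For concreteness I would split into the two cases. In the Cantor case, let $w_0 \in X_\vartheta$ be the point constructed in Proposition \ref{prop:top_tran} and let $U,V \subseteq X_\vartheta$ be nonempty open sets. Pick $k \in \Z$ with $S^k(w_0) \in U$; since $\mc O w_0$ is dense we may also choose $m \in \Z$ with $S^m(w_0) \in V$. Setting $n = m - k$ (or $k - m$, after replacing $S$ with $S^{-1}$), we obtain $S^n(S^k(w_0)) \in V$, so $S^n(U) \cap V \neq \varnothing$. Since the shift is a homeomorphism the sign issue can be handled by noting that $\mc O(S^k(w_0))$ is itself dense, so some forward iterate of the point in $U$ lands in $V$. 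In the finite discrete case, a dense orbit is the whole space, so $X_\vartheta$ is a single periodic orbit and transitivity is immediate.

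No genuine obstacle remains, since both ingredients have already been proved: Proposition \ref{prop:top_tran} supplies the dense-orbit element and Proposition \ref{prop:cantor} guarantees that the topological space is nice enough for the equivalence between having a dense orbit and topological transitivity to hold. The only care needed is the standard argument that a dense \emph{two-sided} orbit in a Cantor (or finite) system yields the existence of a nonnegative $n$ with $S^n(U) \cap V \neq \varnothing$, which follows from the homeomorphism property of $S$ by shifting the reference point within the orbit.
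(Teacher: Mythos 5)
Your proposal is correct and takes essentially the same route as the paper, which likewise obtains the corollary by combining Proposition \ref{prop:top_tran} with Proposition \ref{prop:cantor} and the quoted equivalence between admitting a dense orbit and topological transitivity for Cantor or finite discrete spaces. One small caveat on your explicit Cantor-case argument: passing from a dense two-sided orbit to a nonnegative $n$ with $S^n(U)\cap V\neq\varnothing$ really uses perfectness (deleting finitely many points from a dense subset of a perfect space leaves it dense, so the forward orbit of $S^k(w_0)$ is still dense), not merely the fact that $S$ is a homeomorphism.
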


Although tiling spaces are not the main focus of this work, we briefly mention them in order to generalise a result attributed to G\"{a}hler and Miro in the context of the random Fibonacci substitution \cite{gp}. They showed that the tiling space associated to the random Fibonacci substitution is connected. In fact, we can show that this is true for any primitive random substitution.

\begin{definition}
Let $(X,s)$ be a subshift. Let $\Omega_X$ denote the \emph{tiling space} or \emph{continuous hull} associated with $X$ defined by
\[
\Omega_X = X \times [0,1]/\sim
\]
where $(x,1) \sim (Sx,0)$ generates an equivalence relation that glues together the ends of intervals associated with elements $x \in X$ with the beginning of the intervals associated with the shifted element $Sx \in X$.

If $\vartheta$ is a random substitution with RS-subshift $X_\vartheta$ then we write $\Omega_\vartheta := \Omega_{X_\vartheta}$ and call $\Omega_\vartheta$ the \emph{tiling space} associated with $\vartheta$. 
\end{definition}
For an accessible introduction to the study of tiling spaces from a topological perspective, we recommend the book of Sadun \cite{sa}.
\begin{prop}
Let $X$ be a subshift admitting a dense orbit. The associated tiling space $\Omega_X$ is connected.
\end{prop}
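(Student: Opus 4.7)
The plan is to exhibit an explicit connected dense subset of $\Omega_X$, from which the connectedness of $\Omega_X$ follows since the closure of a connected set is connected. Let $q \colon X \times [0,1] \to \Omega_X$ denote the quotient map, and fix a point $x_0 \in X$ whose $S$-orbit $\mathcal{O} x_0 = \{S^n x_0 \mid n \in \Z\}$ is dense in $X$.

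First, I would form the arcs $A_n := q(\{S^n x_0\} \times [0,1])$ for each $n \in \Z$. Each $A_n$ is the continuous image of the interval $\{S^n x_0\} \times [0,1]$, hence connected. The defining identification $(x,1) \sim (Sx,0)$ gives
\[
q(S^n x_0, 1) \;=\; q(S^{n+1} x_0, 0) \;\in\; A_n \cap A_{n+1},
\]
so consecutive arcs share a point. A chain of connected sets in which each consecutive pair meets has connected union, so
\[
A \;:=\; \bigcup_{n \in \Z} A_n
\]
is a connected subset of $\Omega_X$ (one can make this rigorous by checking inductively that $\bigcup_{|n|\leq N} A_n$ is connected and then using that a nested union of connected sets with common points is connected).

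Second, I would verify that $A$ is dense in $\Omega_X$. Since $\mathcal{O}x_0$ is dense in $X$, the set $\mathcal{O}x_0 \times [0,1]$ is dense in $X \times [0,1]$ (density in one factor combined with the whole other factor gives density in the product). The quotient map $q$ is a continuous surjection, and continuous surjections send dense sets to dense sets (if $U \subseteq \Omega_X$ is open and non-empty, then $q^{-1}(U)$ is open and non-empty, hence meets $\mathcal{O}x_0 \times [0,1]$, so $U$ meets $A$). Therefore $A = q(\mathcal{O}x_0 \times [0,1])$ is dense in $\Omega_X$.

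Finally, I would combine these two facts: $\overline{A}$ is connected because $A$ is connected, and $\overline{A} = \Omega_X$ because $A$ is dense. Hence $\Omega_X$ is connected. The argument contains no real obstacle; the only point that requires a line of care is that the arcs $A_n$ are genuinely glued at their endpoints under $q$, which is immediate from the equivalence relation defining $\Omega_X$.
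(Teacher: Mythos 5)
Your proof is correct and rests on the same key object as the paper's: the suspension $q(\mathcal{O}x_0 \times [0,1])$ of the dense orbit, which both arguments identify as a connected, dense subset of $\Omega_X$ (the paper calls it $w+\R$ and realises it as a continuous image of the real line). The only difference is cosmetic---you conclude directly via ``the closure of a connected dense set is the whole space and is connected,'' whereas the paper reaches the same conclusion by contradiction with a clopen separation---so this is essentially the paper's proof.
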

\begin{proof}
We first note that for every $x \in X$, the set $x + \R := \{(S^n(x),t) \mid n \in \Z, t \in [0,1)\}$ is a path connected subspace of $\Omega(X)$ which is the continuous image of a real line, given by the mapping $r \mapsto (S^{\lfloor r \rfloor}x,r-\lfloor r \rfloor)$.

Let $w \in X$ be an element with a dense orbit. Suppose for contradiction that $\Omega_X$ is not connected. Then $\Omega_X = U \cup V$ for some disjoint non-empty clopen subsets $U$ and $V$. It follows that $U \cap (w + \R)$ and $V \cap (w + \R)$ are disjoint clopen subsets of $\mc O w$ with the subspace topology and $(U \cap (w + \R))\cup (V \cap (w + \R)) = \mc O w$. It remains to show that both $U \cap (w + \R)$ and $V \cap (w + \R)$ are non-empty.

Let $(x,t) \in U$ which exists as $U$ is non-empty. As $w$ has dense orbit in $X$, and $U$ is open, there exists an $n \in \Z$ such that $(S^n w,t) \in U$. It follows that $(S^n w,t) \in U \cap (w + \R)$ and so $U \cap (w + \R)$ is non-empty. Similarly, $V \cap (w + \R)$ is non-empty and so $w + \R$ is not connected. This contradicts the fact that $w + \R$ is the continuous image of a connected space and so must be connected. It follows that $\Omega_X$ is connected.
\end{proof}
Then, by a simple application of Proposition \ref{prop:top_tran}, the result follows as a corollary.
\begin{coro}
Let $\vartheta$ be a primitive random substitution. The associated tiling space $\Omega_\vartheta$ is connected. \qed
\end{coro}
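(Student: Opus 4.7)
The plan is essentially to chain together the two ingredients already established just above: Proposition \ref{prop:top_tran} guarantees that the primitive RS-subshift $X_\vartheta$ contains an element whose shift-orbit is dense in $X_\vartheta$, and the immediately preceding proposition says that any subshift with a dense orbit has connected tiling space. So the proof is a one-line invocation: set $X = X_\vartheta$ in the preceding proposition, verify its hypothesis via Proposition \ref{prop:top_tran}, and conclude that $\Omega_\vartheta = \Omega_{X_\vartheta}$ is connected.

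More explicitly, I would first remind the reader that $\Omega_\vartheta$ was defined precisely as $\Omega_{X_\vartheta}$, so there is no subtlety in identifying the two tiling spaces. Then I would apply Proposition \ref{prop:top_tran} to produce some $w \in X_\vartheta$ with $\overline{\{S^k w \mid k \in \Z\}} = X_\vartheta$, which is exactly the hypothesis needed by the preceding proposition. Invoking that proposition with this $w$ gives connectedness of $\Omega_\vartheta$.

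There is no genuine obstacle here, which is presumably why the authors have formatted the statement as a corollary and already terminated it with \verb|\qed|. The only thing to be careful about is a purely bookkeeping matter: making sure that the hypothesis in the preceding proposition (a subshift admitting a dense orbit) is really matched by the conclusion of Proposition \ref{prop:top_tran} (existence of an element with dense shift-orbit), which it is verbatim. So my proposed proof reduces to: \emph{By Proposition \ref{prop:top_tran}, $X_\vartheta$ contains an element with dense shift-orbit, and hence by the preceding proposition its tiling space $\Omega_\vartheta$ is connected.}
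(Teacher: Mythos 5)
Your proof is correct and matches the paper exactly: the authors likewise observe that the corollary follows by combining Proposition \ref{prop:top_tran} with the immediately preceding proposition on connectedness of tiling spaces of subshifts admitting a dense orbit, which is why no separate proof is given. Nothing further is needed.
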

G\"{a}hler and Miro also studied the first \v{C}ech cohomology group $\check{H}^1(\Omega_\vartheta)$ of the tiling space in the particular case where $\vartheta$ is the random Fibonacci substitution \cite{gp}. They showed that $\check{H}^1$ has infinite rank. Using results from this section, one can show the same in much more generality. In fact, a tiling space associated with a primitive random substitution is either minimal or has its first \v{C}ech cohomology group of infinite rank. As cohomology of tiling spaces is not the focus of this paper, a proof of this result is postponed to forthcoming work of the first author.

\section{Measure theoretic properties}\label{sec:ergodic}

So far, we have not made any use of the probability vectors $\boldsymbol p_i$. One might wonder why we have introduced them, and why one is justified in calling these substitutions `random'. The reason is that in some cases, e.g. when one studies ergodic or spectral properties \cite{bss}, the probabilities play a vital role. Here, we focus on some aspects of the ergodic part.

It is well-known that, in the case of a primitive deterministic substitution $\dsub$, the frequency of each element of $\mc L_{\dsub}$ exists. Furthermore, it is encoded in the statistically normalised right Perron--Frobenius eigenvector of the associated substitution matrix; see \cite[Sec. 5.4]{qu}. The idea is to introduce an \emph{induced substitution} $\dsub_{\ell}$, which is a primitive substitution that acts on the set of elements of $\mc L_{\dsub}$ of length $\ell$. Such an induced substitution is also called \emph{right-collared} in the literature \cite{gm}, and a \emph{two-sided collared} version also exists.

Our aim is a generalisation of Queffelec's method to the random situation. This was first investigated by Moll \cite[Ch. 4]{mo} based on the example of the random noble means substitutions, which are generalisations of the random Fibonacci substitution mentioned above. Unfortunately, Moll's proof of ergodicity contains several errors. In light of this, we assume here that the measures we are going to construct are ergodic and we postpone a corrected proof of ergodicity to forthcoming work \cite{grs}. Using this result, we are able to characterise the uniquely and strictly ergodic dynamical systems that arise from random substitutions.

Next, we are going to introduce a substitution $\vartheta_{\ell}$ that acts on the alphabet $\mc A_{\ell}$ of $\vartheta$-legal words of length $\ell$. We denote by $\mc A_{\ell}^*$ the set of all finite words with respect to the alphabet $\mc A_{\ell}$. When we want to generalise Queffelec's method, we have to take into account the (possibly) different lengths of the images $\vartheta(a_i)$ and the realisation probabilities of subwords in the image of some word $w\in\mc A_{\ell}$ under $\vartheta$, since we work with random substitutions.  

\begin{definition}
Let $\ell\in\N$ and $\vartheta$ be a primitive random substitution. Then, we refer to
\[
\vartheta_{\ell} \colon \mc A_{\ell} \to \mc A_{\ell}^+
\]
as the \emph{induced random substitution} defined by
\[
w^{(i)} \mapsto 
  \begin{cases}
  u^{(i,1)} := \left( v_{[k,k+\ell-1]}^{(i,1)} \right)_{0\leq k\leq |\vartheta(w_0^{(i)})|-1}, & \text{ with probability } p_{i1} \\
 \quad \quad \quad \quad \quad \quad \vdots& \quad \quad \quad \quad \quad \vdots \\
  u^{(i,n_i)}:= \left( v_{[k,k+\ell-1]}^{(i,n_i)} \right)_{0\leq k\leq |\vartheta(w_0^{(i)})|-1}, & \text{ with probability } p_{in_i} ,
  \end{cases}
\]
where $w^{(i)}\in \mc A_{\ell}$ and $v^{(i,j)}$ is an image of $w^{(i)}$ under $\vartheta$  with probability $p_{ij}$. Here, $|\vartheta(w_0^{(i)})|$ is the length of the corresponding realisation of $\vartheta(w_0^{(i)})$.

The \emph{induced random substitution matrix} is given by $M_{\ell} := M_{\vartheta_{\ell}}$.
\end{definition}

\begin{example}
Consider the random Fibonacci substitution $\sigma$ from above. Let $\ell=2$. In this situation, we have $\mc A_2=\{aa,ab,ba,bb\}$, and we obtain the induced substitution $\vartheta_2$ given by
\[
(aa) \mapsto 
\begin{cases}
(ab)(ba) & \text{ with probability } q^2 \\
(ab)(bb) & \text{ with probability } qp  \\
(ba)(aa) & \text{ with probability } pq  \\
(ba)(ab) & \text{ with probability } p^2
\end{cases}, \quad  \quad 
(ab) \mapsto
\begin{cases}
(ab)(ba) & \text{ with probability } q \\
(ba)(aa) & \text{ with probability } p
\end{cases}
\]
\[
(ba) \mapsto
\begin{cases}
(aa) & \text{ with probability } q  \\
(ab) & \text{ with probability } p
\end{cases}, \quad  \quad
(bb) \mapsto (aa) \text{ with probability } 1.
\]
Hence, we get
\[
M_2 = 
\begin{bmatrix}
pq & p & q & 1 \\
1-pq & q & p & 0 \\
1-pq & 1 & 0 & 0 \\
pq & 0 & 0 & 0
\end{bmatrix}.
\]
This matrix is primitive. The Perron--Frobenius eigenvalue is $\tau$, where $\tau$ denotes the golden ratio, and the associated right eigenvector is 
\begin{equation} \label{Eq:R}
R=\frac{1}{\tau+2\tau^2(1-p+p^2)+p(1-p)} \cdot 
\begin{bmatrix}
\tau  \\
\tau^2(1-p+p^2) \\
\tau^2(1-p+p^2)\\
 p(1-p)
\end{bmatrix}
\end{equation}
\end{example}
\begin{remark}
Notice that $\vartheta_2$ admits the word $(ab)(ab)$ in its language because
\[
(ab)(ab) \blacktriangleleft \vartheta((ba)(aa)) \overset{\bullet}{=} \vartheta^2((ab)).
\]
This highlights a key difference between induced substitutions for deterministic substitutions as compared to random substitutions. For a primitive deterministic substitution $\dsub$, the induced substitution $\dsub_{\ell}$ gives rise to a topologically conjugate subshift---that is, $X_\dsub \simeq X_{\dsub_{\ell}}$. The topological conjugacy is given by a sliding block code of length $\ell$ which maps the letters in an element of $X_\dsub$ to the length $\ell$ word to their immediate right. The inverse is given by the forgetful map. In the case of RS-subshifts, as the above example suggests, such a sliding block code will in general not be surjective; there is no $\vartheta$-legal word, under the usual sliding block code, which will give rise to the $\vartheta_2$-legal word $(ab)(ab)$. In general, the most we can say is that the usual sliding block code gives an embedding of subshifts $e \colon X_\vartheta \hookrightarrow X_{\vartheta_{\ell}}$ and the usual forgetful map gives a topological factor map $f \colon X_{\vartheta_{\ell}} \twoheadrightarrow X_\vartheta$. The map $e$ is also a right inverse of $f$, giving $f \circ e = \operatorname{id}_{X_{\vartheta}}$. 
\end{remark}

As in the case for deterministic substitutions, an induced substitution for a primitive random substitution is also primitive.
\begin{prop}
Let $\ell$ be a positive integer and let $\vartheta$ be a random substitution. If $\vartheta$ is primitive then $\vartheta_\ell$ is primitive.
\end{prop}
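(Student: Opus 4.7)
The plan is to reduce primitivity of $\vartheta_\ell$ to primitivity of $\vartheta$ by means of an explicit combinatorial description of iterated images under $\vartheta_\ell$. The key claim I would isolate is: for any $K \geq 0$ and any $w^{(j)} \in \mc A_\ell$, a realization of $\vartheta_\ell^K(w^{(j)})$ consists precisely of the length-$\ell$ subwords at positions $0, 1, \ldots, |\vartheta^K(w_0^{(j)})| - 1$ of the corresponding realization of $\vartheta^K(w^{(j)})$ (in particular, their starting positions all lie within the image of the first letter $w_0^{(j)}$). Modulo this lemma, primitivity of $\vartheta_\ell$ is essentially immediate.

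First I would prove the key lemma by induction on $K$. The base case $K=1$ is the definition of $\vartheta_\ell$. For the inductive step, suppose the statement holds for $K$. Writing $\vartheta_\ell^{K+1}(w^{(j)}) = \vartheta_\ell(\vartheta_\ell^K(w^{(j)}))$, the $p$-th letter $y_p$ of $\vartheta_\ell^K(w^{(j)})$ is by hypothesis the length-$\ell$ window at position $p$ of $\vartheta^K(w^{(j)})$, whose first letter $(y_p)_0$ is the $p$-th letter of $\vartheta^K(w^{(j)})$. Applying $\vartheta_\ell$ to $y_p$ then contributes the length-$\ell$ windows of $\vartheta^{K+1}(w^{(j)})$ at positions $\pi(p), \pi(p)+1, \ldots, \pi(p+1)-1$, where $\pi(p)$ is the position in $\vartheta^{K+1}(w^{(j)})$ at which $\vartheta(y_p)$ begins. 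These ranges telescope over $p = 0, \ldots, |\vartheta^K(w_0^{(j)})|-1$ to yield precisely positions $0, 1, \ldots, \pi(|\vartheta^K(w_0^{(j)})|)-1 = |\vartheta^{K+1}(w_0^{(j)})|-1$, since the first $|\vartheta^K(w_0^{(j)})|$ letters of $\vartheta^K(w^{(j)})$ constitute (the chosen realization of) $\vartheta^K(w_0^{(j)})$.

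With the lemma in hand, primitivity of $\vartheta_\ell$ follows quickly. For each $w^{(i)} \in \mc A_\ell$, legality yields $m_i$ and $a_i \in \mc A$ with $w^{(i)} \blacktriangleleft \vartheta^{m_i}(a_i)$, and primitivity of $\vartheta$ provides some uniform $k_0$ with $a \blacktriangleleft \vartheta^{k_0}(b)$ for all $a,b$; composing, $w^{(i)} \blacktriangleleft \vartheta^{m_i+k_0}(b)$ for every $b \in \mc A$. A short argument (peeling off one $\vartheta$ and using legality for any realization) shows that this persists for all higher powers, so setting $K := \max_i (m_i + k_0)$, which is finite since $\mc A_\ell$ is finite, gives $w^{(i)} \blacktriangleleft \vartheta^K(b)$ for every $w^{(i)} \in \mc A_\ell$ and every $b \in \mc A$. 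Any such occurrence sits at a position $p \leq |\vartheta^K(b)| - \ell \leq |\vartheta^K(b)| - 1$, so by the key lemma (applied to $b = w_0^{(j)}$) $w^{(i)}$ appears as a letter in a realization of $\vartheta_\ell^K(w^{(j)})$, for every pair $(w^{(i)}, w^{(j)}) \in \mc A_\ell \times \mc A_\ell$.

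The main obstacle is the careful bookkeeping of realizations: one must verify that a realization of $\vartheta^K(w^{(j)})$ in which $w^{(i)}$ occurs at the desired position can be lifted to a legitimate realization of $\vartheta_\ell^K(w^{(j)})$. This comes down to the fact that realizations at distinct letters are independent, so one may freely choose the realization of $\vartheta^K$ on $w_0^{(j)}$ (to contain $w^{(i)}$) while making arbitrary independent choices on the remaining letters of $w^{(j)}$; these choices then induce a compatible realization of $\vartheta_\ell^K(w^{(j)})$ via the bijection between nested realizations of $\vartheta$ and of $\vartheta_\ell$ established in the key lemma.
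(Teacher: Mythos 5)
Your argument is correct and follows essentially the same route as the paper: both reduce primitivity of $\vartheta_\ell$ to producing a uniform power $K$ with $w^{(i)} \blacktriangleleft \vartheta^{K}(w_0^{(j)})$ for every pair of legal $\ell$-words, the paper deferring the window-correspondence between realisations of $\vartheta_\ell^K$ and of $\vartheta^K$ to the standard collared-substitution argument, which you instead prove explicitly by induction. One small caveat: your key lemma should assert only one inclusion (every coherent realisation of $\vartheta^K(w^{(j)})$ induces a legitimate realisation of $\vartheta_\ell^K(w^{(j)})$ whose letters are the sliding windows), since for $K \geq 2$ the independent choices of $\vartheta_\ell$ on overlapping windows produce realisations that are not the window sequence of any single realisation of $\vartheta^K(w^{(j)})$ --- but this is the only direction your argument actually uses, and you address the required lifting in your final paragraph.
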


\begin{proof}
The proof is similar to the usual proof for collared substitutions \cite{ap}.

It is enough to show that for any $\ell \in \N$, there exists a $k_\ell$ such that for every pair of words $u,v \in \mc L^\ell_\vartheta$ we have $u \blacktriangleleft \vartheta^{k_\ell}(v)$. By primitivity of $\vartheta$, let $k$ be such that for every $a_i, a_j \in \mc A$, we have $a_i \blacktriangleleft \vartheta^k(a_j)$. Without loss of generality, suppose that $k = 1$ (otherwise replace $\vartheta$ with $\vartheta^k$). By primitivity, let $k_u$ and $a_u \in \mc A$ be such that $u \blacktriangleleft \vartheta^{k_u}(a_u)$. As $a_u \blacktriangleleft \vartheta(a_i)$ for all $a_i \in \mc A$, we have $u \blacktriangleleft \vartheta^{k_u}(\vartheta(a_i)) \overset{\bullet}{=} \vartheta^{k_u+1}(a_i)$. Then by induction, $u \blacktriangleleft \vartheta^{k_u+n}(a_i)$ for every $n \geq 1$ and every letter $a_i \in \mc A$. As there are only finitely many letters in $\mc A$ and only finitely many pairs of words $u,v \in \mc L^\ell_\vartheta$, there must then exist a finite $k_{\ell} = \max\{k_u \mid u \in \mc L^\ell_\vartheta\} + n_{\ell}$ such that for every $u,v \in \mc L^\ell_\vartheta$, $u \blacktriangleleft \vartheta^{k_{\ell}}(v_0) \blacktriangleleft \vartheta^{k_{\ell}}(v)$, as required.
\end{proof}

Since the matrix $M_{\ell}$ is primitive, we know from Perron--Frobenius theory that there is a unique positive right eigenvector $\boldsymbol R_{\ell}$ and a unique positive left eigenvector $\boldsymbol L_{\ell}$ corresponding to the maximal positive eigenvalue $\lambda_{\ell}$ such that
\[
\| \boldsymbol R_{\ell}\|_1 =1 \quad \text{ and } \quad \left\langle\boldsymbol L_{\ell}, \boldsymbol R_{\ell} \right\rangle = 1.
\]  
As in the deterministic case, we would like to interpret the entries of $\boldsymbol R_{\ell}$ as the frequencies of the legal words of length $\ell$. In order to do so, let us proceed as follows.

An open, closed and countable basis for the topology of $\mc A^{\Z}$ is given by the class $\mathfrak Z(\mc A^{\Z})$ of \emph{cylinder sets}
\[
\mc Z_k(v):= \{w\in \mc A^{\Z} \mid w_{[k,k+\ell-1]}=v\}
\]
for any $k\in\Z$ and $v\in\mc A$ of length $\ell$. We refer to \cite[Sec. 2]{bi}, \cite[Ch. 6]{lm} and \cite[Ch. 4]{qu} for general background. If $X \subseteq \mc A^{\Z}$ is a subshift, the class of cylinder sets $\mathfrak Z(X)$ is induced by $\mathfrak Z(\mc A^{\Z})$ via the subspace topology. Here, we get
\[
\mathfrak Z(X) := \{\mathcal Z \, \cap \, X \mid \mathcal Z \in \mathfrak Z(\mc A^{\Z})\}.
\]
Consider the RS-subshift $X_{\vartheta}$. It is well-known that $\mathfrak Z(X_{\vartheta})$ generates a Borel $\sigma$-algebra $\mathfrak B_{\vartheta}$ of $X_{\vartheta}$, compare \cite[Sec. 4.1]{bible}. In later sections it will be useful to consider subclasses of $\mathfrak Z(X_{\vartheta})$ consisting of those cylinder sets that contain the reference point $0$:
\[
\mathfrak Z_0(X_{\vartheta}) := \{\mathcal Z_k(v) \in \mathfrak Z(X_{\vartheta}) \mid -|v|+1 \leq k \leq 0 \} \, \cup\, \{X_{\vartheta}\}.
\]
Note that $\mathfrak Z_0(X_{\vartheta})$ also generates the $\sigma$-algebra $\mathfrak Z(X_{\vartheta})$. Let $v$ be a legal word of length $\ell$. Then, we define a measure $\mu$ on $\mathcal Z_k(v) \in \mathfrak Z_0(X_{\vartheta})$ by
\begin{equation} \label{Eq:char_mu}
\mu(\mathcal Z_k(v)) := \boldsymbol R_{\ell}(v),
\end{equation}
for any $k\in\Z$, where $\boldsymbol R_{\ell}(v)$ is the entry of the statistically normalised right Perron--Frobenius eigenvector of the primitive matrix $M_{\ell}$ corresponding to the word $v$. According to \cite[Sec. 5.4]{qu}, this is a consistent definition of a measure on $\mathfrak Z_0(X_{\vartheta})$ and there is an extension of $\mu$ to the Borel $\sigma$-algebra $\mathfrak B_{\vartheta}$ \cite[Cor. 2.4.9]{par}. Due to \cite[Prop. 2.5.1]{par}, this extension is unique and we will denote it again as $\mu$. Furthermore, $\mu$ is a probability measure on $X_{\vartheta}$ because for any $k \in \Z$ and $\ell\in\N$, we have
\[
\mu(X_{\vartheta}) = \mu \Big( \dot{\bigcup_{v \in \mc L_{\vartheta}^{\ell}}} \mc Z(v) \Big) = \sum_{v \in \mc L_{\vartheta}^{\ell}} \mu \big(\mc Z_k(v)\big) = \sum_{v \in \mc L_{\vartheta}^{\ell}} \boldsymbol R_{\ell}(v) = 1.
\]
Moreover, the $S$-invariance of $\mu$ is an immediate consequence of \cite[Thm. 1.1]{wal} because $S^{-1} \mc Z_{k-1}(v) = \mc Z_k(v)$ and
\[
\mu\big(S^{-1}\mc Z_k(v)\big) = \mu \big(\mc Z_{k+1}(v)\big) = \boldsymbol R_{\ell}(v) = \mu \big(\mc Z_k(v)\big).
\]

We will show in \cite{grs} that, for certain random substitutions, the measure $\mu$ is ergodic. This result implies that the relative frequencies of subwords of length $\ell$ can be computed by the statistically normalised right Perron--Frobenius eigenvector $\boldsymbol R_{\ell}$. To see this, consider the function $1_{\mc Z_t(v)}(x)$ with $v\in \mc L_{\vartheta}^{\ell}$, which is obviously integrable. From the ergodicity of $\mu$, we infer
\[
\lim_{n\to\infty} \frac{1}{n} \sum_{i=s}^{n-1+s} 1_{\mc Z_t(v)}(S^ix) = \int_{X_{\vartheta}} 1_{\mc Z_t(v)}\ d\mu = \mu(\mc Z_t(v)) = \boldsymbol R_{\ell}(v)
\]
for $\mu$-almost every $x\in X_{\vartheta}$.

The next question that arises is whether we can expect the system $(X_{\vartheta},S)$ to be uniquely ergodic, which happens in the deterministic situation \cite[Thm. 5.6]{qu}. It is obvious that different choices of the probability vectors $\boldsymbol p_i$ lead to the same RS-subshift $X_{\vartheta}$. Still, in most cases, they give rise to different measures $\mu$. Consequently, the system $(X_{\vartheta},S)$ is not uniquely ergodic in general. For example, consider the random Fibonacci substitution $\sigma$. If we choose $\boldsymbol p_1=\big(\frac{1}{2},\frac{1}{2}\big)$, then for $\mu$-almost all $x\in X_{\sigma}$ we obtain from Eq. \eqref{Eq:R}
\[
\boldsymbol R_{2}(bb) \approx 0.043.
\]
On the other hand, choosing $\boldsymbol p_1=(1,0)$, we get
\[
\boldsymbol R_{2}(bb) =0.
\]
However, we will now characterise the uniquely/strictly ergodic systems.

\begin{theorem} \label{thm:str_erg}
Let $\vartheta$ be a primitive random substitution. 
\begin{enumerate}
\item[(a)] The corresponding dynamical system $(X_{\vartheta},S)$ is uniquely ergodic only if the right Perron--Frobenius eigenvectors $\boldsymbol R_{\ell}$ are independent of the probability vectors $\boldsymbol p_i$ for every $\ell$ and every $i$.
\item[(b)] Additionally, let $\mu$ be ergodic. Then, $(X_{\vartheta},S)$ is uniquely ergodic if and only if the right Perron--Frobenius eigenvectors $\boldsymbol R_{\ell}$ are independent of the probability vectors $\boldsymbol p_i$ for every $\ell$ and every $i$.
\end{enumerate}
\end{theorem}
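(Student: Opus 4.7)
The plan is to leverage the fact that each non-degenerate choice of probability vectors $\boldsymbol p=(\boldsymbol p_1,\ldots,\boldsymbol p_n)$ determines the same subshift $X_\vartheta$ but in general a different $S$-invariant probability measure $\mu_{\boldsymbol p}$, characterised on cylinders by $\mu_{\boldsymbol p}(\mathcal Z_k(v))=\boldsymbol R_\ell(v;\boldsymbol p)$ for $v\in\mc L_\vartheta^\ell$, where the dependence of $\boldsymbol R_\ell$ on the probabilities is made explicit in the notation. Part (a) then follows directly: if $(X_\vartheta,S)$ is uniquely ergodic, then for any two choices $\boldsymbol p$ and $\boldsymbol p'$ the measures $\mu_{\boldsymbol p}$ and $\mu_{\boldsymbol p'}$ must coincide, and evaluating on cylinder sets yields $\boldsymbol R_\ell(v;\boldsymbol p)=\boldsymbol R_\ell(v;\boldsymbol p')$ for every $\ell$ and every $v\in\mc L_\vartheta^\ell$, so $\boldsymbol R_\ell$ cannot depend on $\boldsymbol p$.

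For part (b), the only-if direction is covered by (a), so the work lies in the converse. Assume $\mu$ is ergodic and $\boldsymbol R_\ell$ is independent of $\boldsymbol p$ for every $\ell$. Let $\nu$ be an arbitrary $S$-invariant probability measure on $X_\vartheta$; by the ergodic decomposition theorem I may restrict attention to ergodic $\nu$ and aim to show $\nu=\mu$. The approach is to exhibit $\nu$ as a measure of the form $\mu_{\boldsymbol p^\ast}$ for some non-degenerate probability vector $\boldsymbol p^\ast$: the values $\boldsymbol\nu_\ell(v):=\nu(\mathcal Z_0(v))$ form a probability distribution on $\mc L_\vartheta^\ell$ compatible with the substitution structure, and the entries of the induced substitution matrices $M_\ell(\boldsymbol p)$ depend affinely on the $p_{ij}$, so one tries to solve the eigenvector equations $M_\ell(\boldsymbol p^\ast)\,\boldsymbol\nu_\ell=\lambda_\ell \boldsymbol\nu_\ell$ for an admissible $\boldsymbol p^\ast$. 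Once such a $\boldsymbol p^\ast$ is produced, the independence hypothesis forces $\mu_{\boldsymbol p^\ast}=\mu$, and therefore $\nu=\mu$.

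The main obstacle is precisely this realisation step: the parameters $\boldsymbol p$ supply only $\sum_i(k_i-1)$ degrees of freedom, whereas the eigenvector equations impose constraints for every length $\ell$ simultaneously, so one must argue that the admissible vectors $\boldsymbol\nu_\ell$ arising from $S$-invariant measures on $X_\vartheta$ are exactly those obtainable from some $\boldsymbol p^\ast$. A cleaner alternative route, which may be easier to execute, is to invoke the standard characterisation that a subshift is uniquely ergodic iff the word frequencies $\freq(v,x)$ exist uniformly in $x$ and are independent of $x$: Birkhoff applied to the ergodic measure $\mu$ already gives $\freq(v,x)=\boldsymbol R_\ell(v)$ on a $\mu$-conull set, and combining this with the density of the linearly repetitive elements from Proposition \ref{prop:lin}, on which frequencies exist by linear recurrence, together with continuity of the finite partial-sum functionals $x\mapsto \tfrac1n\sum_{i=0}^{n-1}1_{\mathcal Z_0(v)}(S^ix)$, would propagate the equality and its uniformity to every $x\in X_\vartheta$.
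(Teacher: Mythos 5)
Your part (a) is exactly the paper's argument and is fine. Part (b), however, has a genuine gap in both of the routes you sketch, and neither uses the independence hypothesis in the way it must be used.

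Your first route requires that every ergodic $S$-invariant measure $\nu$ on $X_\vartheta$ be realisable as $\mu_{\boldsymbol p^\ast}$ for some admissible $\boldsymbol p^\ast$. You correctly flag the degrees-of-freedom mismatch, but the situation is worse than an unresolved technicality: the claim is simply false in general. The full shift $\mc A^\Z$ is realised as a primitive RS-subshift in Section \ref{sec:examples}, and it carries an infinite-dimensional simplex of ergodic measures (all Bernoulli and Markov measures, among others), whereas the family $\{\mu_{\boldsymbol p}\}$ is finite-dimensional. So the realisation step cannot be carried out, and the route collapses. Your second route is also not salvageable as stated: frequencies are limits of the continuous partial-sum functionals, but such limits are not continuous in $x$, so knowing $\freq_v(x)=\boldsymbol R_\ell(v)$ on a $\mu$-conull set and on the dense set $\operatorname{Lin}(X_\vartheta)$ does not propagate existence, equality, or uniformity to all of $X_\vartheta$. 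Indeed this argument never invokes the hypothesis that $\boldsymbol R_\ell$ is independent of $\boldsymbol p_i$, yet without that hypothesis the conclusion is false: for the random Fibonacci substitution the linearly repetitive elements are dense and $\mu$ is (assumed) ergodic, but the system is not uniquely ergodic, since the embedded minimal Fibonacci subshift assigns frequency $0$ to $bb$ while $\boldsymbol R_2(bb)>0$. Moreover, the elements of $\operatorname{Lin}(X_\vartheta)$ live in distinct minimal deterministic subsystems $X_{\dsub^{(\ell)}}$ and need not all have the same frequency vector.

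The paper's proof of (b) works quite differently: it substitutes the eigenvector equation $M\boldsymbol R=\lambda\boldsymbol R$ into the independence hypothesis and varies one probability vector $\boldsymbol p_{j_1}$ over the extreme points $e_{q_1},e_{q_2}$ of the simplex. This yields the combinatorial identity (\ref{Eq:ratio}), which says that for any two realisations $w^{(j_1,q_1)},w^{(j_1,q_2)}$ of $\vartheta(a_{j_1})$, the differences of letter counts $|w^{(j_1,q_1)}|_{a_i}-|w^{(j_1,q_2)}|_{a_i}$ are proportional across all letters $a_i$ with ratios $R_{i_1}/R_{i_2}$. This rigidity of the realisations is what forces the letter frequencies (and, applying the same argument to $\vartheta_\ell$, the word frequencies) to exist uniformly for every element, after which Oxtoby's theorem gives unique ergodicity. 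In short: the independence hypothesis must be converted into a structural constraint on the substitution itself before any frequency statement can be made for all points, and this is the step missing from your proposal.
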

\begin{proof}
It is recalled that, by Oxtoby's theorem \cite[Prop. 4.4]{bible}, a subshift is uniquely ergodic if and only if the frequencies of all finite subwords exist uniformly for each element in the subshift.  \vspace*{3mm} \\
(a) Let us assume that $(X_{\vartheta},S)$ is uniquely ergodic. This means that there is only one $S$-invariant probability measure $\mu$. This is precisely the one we constructed above, see Eq.\ (\ref{Eq:char_mu}). Consequently, it is the same for every choice of the probability vectors $\boldsymbol p_i$. Due to Eq.\ (\ref{Eq:char_mu}), the vectors $\boldsymbol R_{\ell}$ are independent of $\boldsymbol p_i$.    \vspace*{3mm} \\
(b) On the other hand, let us now assume that the vectors $\boldsymbol R_{\ell}$ are independent of the probabilities $\boldsymbol p_i$. We start with the case $\ell=1$ and set $\boldsymbol R:=\boldsymbol R_1$. As the substitution matrix $M$ of $\vartheta$ is given by $\big(\sum_{q=1}^{k_j} p_{jq} |w^{(j,q)}|_{a_i} \big)_{1\leq i,j\leq n}$, we obtain 
\[
\begin{bmatrix}
  \sum_{j=1}^n \sum_{q=1}^{k_j} p_{jq} |w^{(j,q)}|_{a_1} R_j \\
  \vdots \\
  \sum_{j=1}^n \sum_{q=1}^{k_j} p_{jq} |w^{(j,q)}|_{a_n} R_j
\end{bmatrix} = M\boldsymbol{R} = \lambda 
\begin{bmatrix}
R_1 \\
\vdots \\
R_n
\end{bmatrix},
\]
where $\lambda$ is the Perron--Frobenius eigenvalue of $M$. Thus, we have 
\[
\lambda = \sum_{j=1}^n \sum_{q=1}^{k_j} p_{jq}|w^{(j,q)}|_{a_i} \, \frac{R_j}{R_i}
\]
for all $i\in\{1,\ldots,n\}$ and therefore
\[
\sum_{j=1}^n \sum_{q=1}^{k_j}p_{jq}|w^{(j,q)}|_{a_{i_1}}\, \frac{R_j}{R_{i_1}} = 
\sum_{j=1}^n \sum_{q=1}^{k_j}p_{jq}|w^{(j,q)}|_{a_{i_2}}\, \frac{R_j}{R_{i_2}}
\]
for all $i_1,i_2\in\{1,\ldots,n\}$. Now, choose $j_1\in\{1,\ldots,n\}$ and fix $\boldsymbol p_j$ for all $j\in\{1,\ldots,n\}$ with $j\neq j_1$. Then, there are $c_1,c_2>0$, which are independent of $\boldsymbol p_{j_1}$ such that
\[
\sum_{q=1}^{k_{j_1}} p_{j_1q}|w^{(j_1,q)}|_{a_{i_1}}\, \frac{R_{j_1}}{R_{i_1}} + c_1 =
\sum_{q=1}^{k_{j_1}} p_{j_1q}|w^{(j_1,q)}|_{a_{i_2}}\, \frac{R_{j_1}}{R_{i_2}} + c_2. 
\] 
This is equivalent to
\[
\sum_{q=1}^{k_{j_1}} p_{j_1q} \left( |w^{(j_1,q)}|_{a_{i_1}}\, \frac{R_{j_1}}{R_{i_1}} - 
|w^{(j_1,q)}|_{a_{i_2}}\, \frac{R_{j_1}}{R_{i_2}} \right) = c_3,
\]
where $c_3:=c_2-c_1$, for every choice of $\boldsymbol p_{j_1}$. Next, choose $\boldsymbol p_{j_1} = e_{q_1}$ respectively $\boldsymbol p_{j_1} = e_{q_2}$ for some $q_1,q_2\in\{1,\ldots,k_{j_1}\}$.
We obtain
\[
|w^{(j_1,q_1)}|_{a_{i_1}}\, \frac{R_{j_1}}{R_{i_1}} - |w^{(j_1,q_1)}|_{a_{i_2}}\, \frac{R_{j_1}}{R_{i_2}} =
|w^{(j_1,q_2)}|_{a_{i_1}}\, \frac{R_{j_1}}{R_{i_1}} - |w^{(j_1,q_2)}|_{a_{i_2}}\, \frac{R_{j_1}}{R_{i_2}},
\]
which is equivalent to
\begin{equation}  \label{Eq:ratio}
|w^{(j_1,q_1)}|_{a_{i_1}} - |w^{(j_1,q_2)}|_{a_{i_1}} = 
\left( |w^{(j_1,q_1)}|_{a_{i_2}} - |w^{(j_1,q_2)}|_{a_{i_2}} \right) \, \frac{R_{i_1}}{R_{i_2}}.
\end{equation}
Now, Eq.\ (\ref{Eq:ratio}) tells us the following. If $w^{(j_1,q_1)}$ and $w^{(j_1,q_2)}$ are two different realisations of $\vartheta(a_{j_1})$ with $|w^{(j_1,q_1)}|_{a_{i_1}} = |w^{(j_1,q_2)}|_{a_{i_1}}$ for some letter $a_i$, then $|w^{(j_1,q_1)}|_{a_{i_2}} = |w^{(j_1,q_2)}|_{a_{i_2}}$ for every other letter $a_{i_2}$. On the other hand, if $|w^{(j_1,q_1)}|_{a_{i_1}} < |w^{(j_1,q_2)}|_{a_{i_1}}$, then $|w^{(j_1,q_1)}|_{a_{i_2}} < |w^{(j_1,q_2)}|_{a_{i_2}}$ for every other letter $a_{i_2}$ and the ratio 
\[
\frac{|w^{(j_1,q_1)}|_{a_{i_1}} - |w^{(j_1,q_2)}|_{a_{i_1}}}{ |w^{(j_1,q_1)}|_{a_{i_2}} - |w^{(j_1,q_2)}|_{a_{i_2}}}
\]
is given by $\frac{R_{i_1}}{R_{i_2}}$. Hence, the letter-frequencies exist uniformly for every element of $X_{\vartheta}$.

Since, for general $\ell$, we can argue analogously ($\vartheta_{\ell}$ is primitive for all $\ell\in\N$), the claim follows. 
\end{proof}

In general, uniquely ergodic systems are not minimal. However, in the case of primitive random substitutions, we obtain the following corollary.

\begin{coro}
Let $\vartheta$ be a primitive random substitution. If $(X_{\vartheta},S)$ is uniquely ergodic, it is also strictly ergodic. 
\end{coro}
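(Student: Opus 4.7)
The plan is to use the definition of strict ergodicity as the conjunction of unique ergodicity and minimality; since unique ergodicity is given by hypothesis, I only need to show that $(X_\vartheta,S)$ is minimal. My approach is to combine the uniform existence of word frequencies with the strict positivity of the Perron--Frobenius eigenvectors $\boldsymbol R_\ell$ in order to force every $\vartheta$-legal word to appear in every element of $X_\vartheta$.

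First I would invoke Oxtoby's theorem \cite[Prop.\ 4.4]{bible}---already used in the proof of Theorem \ref{thm:str_erg}---to deduce that, under unique ergodicity, the frequency of any $v \in \mc L_\vartheta^\ell$ in any $x \in X_\vartheta$ exists uniformly and equals $\mu(\mc Z_0(v)) = \boldsymbol R_\ell(v)$. Next, since $\vartheta_\ell$ is primitive (established above), the induced matrix $M_\ell$ is primitive, so Perron--Frobenius theory yields that the statistically normalised right eigenvector $\boldsymbol R_\ell$ is strictly positive entrywise. Hence $\boldsymbol R_\ell(v) > 0$ for every $v \in \mc L_\vartheta^\ell$ and every $\ell \geq 1$.

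Combining these two ingredients, every $\vartheta$-legal word $v$ occurs in every $x \in X_\vartheta$ with strictly positive density, and therefore $\mc L(x) \supseteq \mc L_\vartheta$. Since the reverse inclusion is built into the definition of $X_\vartheta$, I obtain $\mc L(x) = \mc L_\vartheta$ for every $x \in X_\vartheta$. A subshift in which every element has the same language is minimal: for any $x, y \in X_\vartheta$ one has $\mc L(y) \subseteq \mc L(x)$, so $y$ lies in the shift-orbit closure of $x$. Together with unique ergodicity, this yields strict ergodicity.

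I do not anticipate a serious obstacle; the argument is essentially a bookkeeping assembly of ingredients already in place. The only mildly delicate point is verifying that each $v \in \mc L_\vartheta$ actually appears in at least one element of $X_\vartheta$ (so that $\boldsymbol R_\ell(v)$ is an honest frequency on a non-empty cylinder set), which follows from non-emptiness of $X_\vartheta$, primitivity of $\vartheta$, and Lemma \ref{lem:closed} to propagate an occurrence of $v$ inside some $\vartheta^k(a)$ into an occurrence inside an actual bi-infinite legal element.
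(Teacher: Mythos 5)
Your proposal is correct and follows essentially the same route as the paper: unique ergodicity plus Oxtoby's theorem gives uniform word frequencies equal to the entries of $\boldsymbol R_{\ell}$, primitivity of $\vartheta_{\ell}$ forces these entries to be strictly positive, and hence every legal word occurs in every element, yielding minimality. You merely spell out the final minimality step (equal languages imply minimality) more explicitly than the paper, and your closing remark about each legal word genuinely occurring in some element of $X_\vartheta$ is a reasonable point of care that the paper leaves implicit.
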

\begin{proof}
Since $\vartheta$ is primitive and $(X_{\vartheta},S)$ is uniquely ergodic, we know that the relative word frequencies of finite words exist uniformly, for every element of $X_{\vartheta}$. Moreover, they are encoded in the entries of the vectors $\boldsymbol R_{\ell}$. But $\vartheta_{\ell}$ is primitive for all $\ell\ge1$, which implies that all entries of $\boldsymbol R_{\ell}$ are positive for all $\ell\ge1$. Due to Oxtoby's theorem, the system $(X_{\vartheta},S)$ is strictly ergodic. 
\end{proof}

Now, we can make use of the right eigenvectors $\boldsymbol R_{\ell}$ to give a characterisation of the minimality of $(X_{\vartheta},S)$ when $\vartheta$ is primitive and $\mu$ is ergodic.

\begin{prop}
Let $\vartheta$ be a primitive random substitution and suppose that the measure $\mu$ is ergodic. The RS-subshift $(X_{\vartheta},S)$ is minimal if and only if the vectors $\boldsymbol R_{\ell}$ do not depend on $\boldsymbol p_i$ for all $\ell\ge 1$ and all $i\in\{1,\ldots,k_i\}$.
\end{prop}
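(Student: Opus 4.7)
The plan is to reduce both directions of this equivalence to Theorem~\ref{thm:str_erg} together with the corollary that immediately follows it. Since the standing hypothesis that $\mu$ is ergodic lets us apply the `if and only if' form of Theorem~\ref{thm:str_erg}(b), the real content of the statement is the equivalence between minimality and unique ergodicity for a primitive RS-subshift; the probability-vector condition is then just the translation of unique ergodicity provided by Theorem~\ref{thm:str_erg}.

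For the easy direction, suppose the vectors $\boldsymbol R_\ell$ are independent of the probability vectors $\boldsymbol p_i$. Under the hypothesis that $\mu$ is ergodic, Theorem~\ref{thm:str_erg}(b) gives unique ergodicity of $(X_\vartheta,S)$, and the corollary following that theorem upgrades unique ergodicity to strict ergodicity; in particular, $(X_\vartheta,S)$ is minimal.

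For the harder direction, assume that $(X_\vartheta,S)$ is minimal. The plan is to embed a classical primitive deterministic substitution subshift into $X_\vartheta$ and then use minimality to promote this embedding to an equality. Concretely, fix any $w \in X_\vartheta$ and reuse the construction from the proof of Proposition~\ref{prop:lin}: for each $\ell$ one obtains a primitive deterministic substitution $\dsub^{(\ell)}$, arising as a particular realisation of a power of $\vartheta$, together with a point $w^{(\ell)} \in X_{\dsub^{(\ell)}}$ that agrees with $w$ on $[-\ell,\ell]$. Because $\dsub^{(\ell)}$ is a realisation of a power of $\vartheta$, the inclusion $X_{\dsub^{(\ell)}} \subseteq X_\vartheta$ holds, and $X_{\dsub^{(\ell)}}$ is nonempty, closed, and shift-invariant, so minimality forces $X_{\dsub^{(\ell)}} = X_\vartheta$. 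The classical unique ergodicity of subshifts of primitive deterministic substitutions~\cite[Thm.~5.6]{qu} then passes to $X_\vartheta$, and Theorem~\ref{thm:str_erg}(a) yields independence of the vectors $\boldsymbol R_\ell$ from the $\boldsymbol p_i$ for every $\ell$ and every $i$.

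The main obstacle is verifying that the deterministic subshift $X_{\dsub^{(\ell)}}$ extracted from the construction really does sit inside $X_\vartheta$, i.e.\ that every $\dsub^{(\ell)}$-legal word is $\vartheta$-legal. This is essentially bookkeeping built into the proof of Proposition~\ref{prop:lin}, but should be stated explicitly since we are now using it to transfer unique ergodicity from the deterministic to the random setting. An alternative route, should one prefer to avoid re-examining that construction, is to observe that minimality combined with the density result of Proposition~\ref{prop:lin} forces $X_\vartheta$ to coincide with the orbit closure of a linearly repetitive element, so $X_\vartheta$ is linearly recurrent, and then invoke the classical fact that linearly recurrent subshifts are uniquely ergodic before applying Theorem~\ref{thm:str_erg}(a).
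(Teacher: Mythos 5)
Your proof is correct and follows essentially the same route as the paper's: the converse direction is identical (Theorem \ref{thm:str_erg}(b) together with its corollary), and for the forward direction both arguments use minimality together with the construction from Proposition \ref{prop:lin} to identify $X_\vartheta$ with a primitive deterministic substitution subshift $X_\dsub$. The only cosmetic difference is that you finish by invoking classical unique ergodicity of $X_\dsub$ and then Theorem \ref{thm:str_erg}(a), whereas the paper argues contrapositively that dependence of $\boldsymbol R_{\ell}$ on $\boldsymbol p_i$ would produce two elements with different word frequencies, contradicting the element-independence of frequencies in a minimal deterministic substitution subshift.
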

\begin{proof}
Note that, by Proposition \ref{prop:lin}, the set $\operatorname{Lin}(X_\vartheta)$ of linearly repetitive elements of $X_\vartheta$ is dense. If $X_\vartheta$ is minimal, then $\operatorname{Lin}(X_\vartheta) = X_\vartheta$. By construction (in the proof of Proposition \ref{prop:lin}), $X_\vartheta$ is equal to a minimal subshift $X_\dsub$, where $\dsub$ is a primitive deterministic substitution. In particular, it is well-known that the relative frequency of a word is independent of the element chosen from a minimal deterministic substitution subshift.

Assume that $\boldsymbol R_{\ell}$ does depend on $\boldsymbol p_i$ for some $\ell$ and some $i$. Then, there are $k,j\in\N$ such that the $k$-th entry of $\boldsymbol R_{\ell}$, let us call it $\boldsymbol R_{\ell,k}$, depends on some $p_{ij}$. This means that the frequency of the word $u$ which corresponds to $\boldsymbol R_{\ell,k}$ depends on $p_{ij}$. But this implies that there are $w,w'\in X_{\vartheta}$ such that
\[
\freq_u(w) \neq \freq_u (w').
\]
Hence, the system $(X_{\vartheta},S)$ cannot be minimal.

Now for the converse, suppose that $\boldsymbol R_{\ell}$ is independent of $\boldsymbol p_i$. Then, $(X_{\vartheta},S)$ is strictly ergodic, hence minimal, by Theorem \ref{thm:str_erg} and the ergodicity of $\mu$.
\end{proof}

We finish this section with the following observations which highlights the interplay (and differences) between the topological notion of dense subsets and the measure theoretic notion of having full measure. Compare and contrast this result with the topological results from Section \ref{sec:dynamics}.

\begin{prop}
Let $\vartheta$ be a primitive random substitution. Assume that $(X_{\vartheta},S,\mu)$ is ergodic, where $\mu$ is given by Eq.\ (\ref{Eq:char_mu}). Then,
\begin{enumerate}
\item[(i)] the set $\operatorname{Dense}(X_\vartheta)$ of elements with dense orbit has measure $1$.
\item[(ii)] the set $\operatorname{Rep}(X_\vartheta)$ of repetitive elements has measure $1$ if $X_{\vartheta}$ is minimal and measure $0$ otherwise.
\item[(iii)] the set $\operatorname{Lin}(X_\vartheta)$ of linearly repetitive elements has measure $1$ if $X_{\vartheta}$ is minimal and measure $0$ otherwise.
\item[(iv)] the set $\operatorname{Per}(X_\vartheta)$ of elements with periodic orbit has measure $1$ if $X_\vartheta$ is finite and measure $0$ otherwise.
\end{enumerate}
\end{prop}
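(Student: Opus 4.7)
My plan is to prove the four parts in order, all resting on two foundational facts that I would establish first. The first is that $\mu$ has full support: because each induced substitution $\vartheta_\ell$ is primitive, the right Perron--Frobenius eigenvector $\boldsymbol R_\ell$ has every entry strictly positive, which by the defining formula (\ref{Eq:char_mu}) gives $\mu(\mc Z_k(v)) > 0$ for every legal word $v$, and hence $\operatorname{supp} \mu = X_\vartheta$. The second fact is the assumed ergodicity of $\mu$. I also note that the non-emptiness and primitivity of $X_\vartheta$ imply $\mc L(X_\vartheta) = \mc L_\vartheta$, which was established implicitly in the proof of Proposition \ref{prop:top_tran}.

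For (i), I would enumerate the countable family of cylinders $\{\mc Z_0(v)\}_{v \in \mc L_\vartheta}$ and apply the Birkhoff ergodic theorem to each indicator $1_{\mc Z_0(v)}$. Because $\mu(\mc Z_0(v)) > 0$, this produces a $\mu$-conull set on which the orbit visits $\mc Z_0(v)$ with positive frequency. Intersecting over this countable family gives a $\mu$-conull set of points $x$ whose language contains every legal word, i.e., $\mc L(x) = \mc L(X_\vartheta)$, which in a subshift is equivalent to $\overline{\mc O x} = X_\vartheta$.

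For (ii) and (iii), I would split on whether $X_\vartheta$ is minimal. In the minimal case, every element is repetitive, and by the construction inside the proof of Proposition \ref{prop:lin} (as already exploited in Proposition \ref{prop:cantor}), one has $X_\vartheta = X_\dsub$ for some primitive deterministic $\dsub$; classical results then give that every element is in fact linearly repetitive, so $\operatorname{Lin}(X_\vartheta) = \operatorname{Rep}(X_\vartheta) = X_\vartheta$ and both sets carry measure $1$. In the non-minimal case, part (i) tells us that $\mu$-a.e.\ $x$ has orbit closure equal to the non-minimal space $X_\vartheta$, so such $x$ cannot be repetitive and a fortiori not linearly repetitive; since $\operatorname{Lin}(X_\vartheta) \subseteq \operatorname{Rep}(X_\vartheta)$, both sets are $\mu$-null.

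For (iv), the finite case is immediate since every point of a finite shift-invariant space is periodic. The main obstacle is the infinite case, where I must show $\mu$ is atomless. My intended argument is the standard one: if $\mu(\{x\}) > 0$, then $S$-invariance forces $\mu(\{S^n x\}) = \mu(\{x\})$ for all $n \in \Z$, so the orbit $\mc O x$ must be finite (else the total mass would be infinite) and $x$ is periodic; ergodicity applied to the $S$-invariant set $\mc O x$ then yields $\mu(\mc O x) = 1$; combined with the full-support property $\operatorname{supp} \mu = X_\vartheta$ and the fact that $\mc O x$ is closed, this forces $X_\vartheta = \mc O x$, contradicting the assumed infiniteness. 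With $\mu$ atomless in hand, I would finish by writing $\operatorname{Per}(X_\vartheta) = \bigcup_{n \geq 1} \{x \in X_\vartheta \mid S^n x = x\}$ as a countable union of finite sets (each fixed-point set of $S^n$ being finite in a subshift over a finite alphabet), so $\mu(\operatorname{Per}(X_\vartheta)) = 0$.
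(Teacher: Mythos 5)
Your proposal is correct, and parts (i)--(iii) follow essentially the same route as the paper: full support of $\mu$ from positivity of the Perron--Frobenius eigenvectors of the primitive matrices $M_\ell$, then the standard ``ergodic plus full support implies a.e.\ dense orbit'' fact (the paper simply cites \cite[Thm.~1.7]{wal} where you unwind it via Birkhoff and a countable intersection over cylinders), and then for (ii)--(iii) the observation that in the non-minimal case a repetitive point has minimal, hence non-dense, orbit closure, so $\operatorname{Rep}(X_\vartheta)$ is disjoint from the conull set $\operatorname{Dense}(X_\vartheta)$. The genuine divergence is in (iv): the paper deduces the infinite case from (iii) via the chain $\operatorname{Per}(X_\vartheta) \subseteq \operatorname{Lin}(X_\vartheta)$ together with the remark that a nonempty periodic orbit in an infinite $X_\vartheta$ forces non-minimality, whence $\mu(\operatorname{Per}(X_\vartheta)) \leq \mu(\operatorname{Lin}(X_\vartheta)) = 0$; you instead prove $\mu$ is atomless (any atom would generate a finite invariant orbit of full measure, which full support upgrades to $X_\vartheta = \mc O x$, contradicting infiniteness) and then kill $\operatorname{Per}(X_\vartheta) = \bigcup_{n \geq 1}\operatorname{Fix}(S^n)$ as a countable union of finite sets. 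Your version is self-contained and does not lean on the repetitivity machinery of (ii)--(iii) at all, while the paper's is shorter given that (iii) is already in hand; both are valid, and your atomlessness observation is of independent interest since it applies to any fully supported ergodic measure on an infinite subshift.
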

\begin{proof}
(i) Let $\operatorname{Dense}(X,f) \subset X$ be the set of points in $X$ with dense orbit under the homeomorphism $f \colon X \to X$. We make use of a well-known result \cite[Thm. 1.7]{wal} which says that $\operatorname{Dense}(X,f)$ has full measure in an ergodic dynamical system $(X,f,\mu)$ whose non-empty open sets all have positive measure. The only thing which remains to be shown is that $\mu(U)>0$ for every non-empty open set $U$. We mentioned earlier that $\mathfrak Z(X_{\vartheta})$ is an open basis for the topology of $\mc A^{\Z}$. Hence, it suffices to show $\mu(\mc Z)>0$ for every $\mc Z\in \mathfrak Z(X_{\vartheta})$. But this follows from Eq.\ (\ref{Eq:char_mu}) and the Perron--Frobenius theorem because $\vartheta$ is primitive and so is $\vartheta_\ell$ for all $\ell$.

(ii) It is well-known that if $X_\vartheta$ is minimal, then every element is repetitive and so
\[\mu(\operatorname{Rep}(X_\vartheta)) = \mu(X_\vartheta) = 1.\]
If $X_\vartheta$ is non-minimal, then for $x \in \operatorname{Rep}(X_\vartheta)$, the orbit closure $\overline{\mathcal{O}x}$ is a minimal subset of $X_\vartheta$ and so $\mathcal{O}x$ cannot be dense in $X_\vartheta$. It follows that $\operatorname{Rep}(X_\vartheta) \cap \operatorname{Dense}(X_\vartheta) = \emptyset$. So, by part (i), countable additivity and $\mu(E) \leq 1$ gives
\[
\mu(\operatorname{Rep}(X_\vartheta)) = \mu(\operatorname{Rep}(X_\vartheta)) + \mu(\operatorname{Dense}(X_\vartheta)) - 1 = \mu(\operatorname{Rep}(X_\vartheta) \cup \operatorname{Dense}(X_\vartheta)) - 1 \leq 0.
\]

(iii) The same proof as for part (ii) follows for $\operatorname{Lin}(X_\vartheta)$ using the fact that repetitivity and linear repetitivity are equivalent for minimal subshifts associated with primitive deterministic substitutions and the fact that $\operatorname{Lin}(X_\vartheta) \subseteq \operatorname{Rep}(X_\vartheta)$.

(iv) If $X_\vartheta$ is finite, then every point is periodic so $\mu(\operatorname{Per}(X_\vartheta)) = \mu(X_\vartheta) = 1$. Now suppose that $X_\vartheta$ is infinite. If $\operatorname{Per}(X_\vartheta) = \emptyset$, then $\mu(\operatorname{Per}(X_\vartheta)) = 0$. If $\operatorname{Per}(X_\vartheta) \neq \emptyset$, then as $X_\vartheta$ is infinite, the RS-subshift cannot be minimal as it has a proper closed invariant subset given by a periodic orbit. It follows from part (iii) that $\mu(\operatorname{Lin}(X_\vartheta)) = 0$. The set $\operatorname{Per}(X_\vartheta)$ is a subset of $\operatorname{Lin}(X_\vartheta)$ and so $\mu(\operatorname{Per}(X_\vartheta)) \leq \mu(\operatorname{Lin}(X_\vartheta)) = 0$.
\end{proof}

\section{Topological entropy}\label{sec:entropy}
The aim of this section is to examine the amount of disorder inherent in the random substitution $\vartheta$. To do so, we refer to the map $C \colon \N \to \N$, assigning to the natural number $\ell$ the number of $\vartheta$-legal words of this length, as the \emph{complexity function} of $\vartheta$. If $X$ is a subshift, then we similarly let $C(\ell) = |\mc L^\ell(X)|$.

\begin{definition}
Let $X$ be a subshift and let $C$ be the complexity function of $X$. Then, we let
\begin{equation} \label{Eq:entropy}
 h_{\text{top}}(X) := \lim_{\ell\to\infty} \frac{\log\big( C(\ell)\big)}{\ell}
\end{equation}
denote the \emph{topological entropy} of $X$.
\end{definition}

Obviously, the complexity function meets $C(k+\ell) \leq C(k)C(\ell)$ for all $k,\ell\in \N$. The existence of the limit in Eq.\ (\ref{Eq:entropy}) in this setting is a well-known result; the proof is based on Fekete's Lemma.

One might be inclined to think that every non-deterministic RS-subshift has positive entropy. However, this is not the case. Consider the random substitution 
\[
\vartheta \colon a \mapsto \{ab, abab\}, b \mapsto \{ab\}.
\]
As previously remarked, the corresponding RS-subshift consists of the two periodic bi-infinite sequences 
\[
\cdots abab.abab\cdots\quad \text{ and } \quad \cdots baba.baba \cdots,
\] 
thus, $h_{\text{top}}(X_\vartheta) = 0$. In fact, this happens because one of the words in the set of realisations of $\vartheta(a)$ is redundant. This means that omitting either $ab$ or $abab$ will not change the subshift and, moreover, gives rise to the subshift of a deterministic substitution. The problem is that the first two letters and the last two letters of the second realisation of the image of $a$ coincide with the first realisation of the image of $a$. This example motivates a useful definition.

Recall that $u$ is an \emph{affix} of $v$ if $u$ is either a prefix or a suffix of $v$. That is, $u = v_{[0,|u|-1]}$ or $u = v_{[|v|-|u|,|v|-1]}$. If $u$ is both a prefix and a suffix of $v$ then we call $u$ a \emph{strong affix} of $v$.
\begin{definition}
If $a \in \mc A$ is such that there exist realisations $u,v \overset{\bullet}{=} \vartheta(a)$ with $|u| \leq |v|$ and $u$ is not a strong affix of $v$ then we say that $a$ \emph{admits a splitting pair} for $\vartheta$.
\end{definition}
The intuition is that whenever $a$ admits a splitting pair, then when substituting a word $u_0$ containing $a$, there are at least two possible distinct realisations of $\vartheta(u_0)$ and so a `splitting' or `branching' occurs in the tree of iterated substitutions of $u_0$. The condition regarding strong affixes ensures that we cannot accidentally double count, as emphasised in the proof of the following theorem.

Note that, trivially, no letter admits a splitting pair for a deterministic substitution.
\begin{theorem}\label{thm:entropy}
Let $\vartheta$ be a random substitution. If there is $w \in X_{\vartheta}$ such that $a$ appears in $w$ with positive letter-frequency $\nu(a)$ and $a$ admits a splitting pair for $\vartheta$, then the system $(X_{\vartheta},S)$ has positive entropy. 
\end{theorem}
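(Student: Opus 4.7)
\medskip

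\textbf{Proof plan.} The strategy is to exploit the splitting pair $(u,v)$ (with $|u|\leq |v|$) directly: inside the $\vartheta$-substitution of a long initial window $w_{[0,L-1]}$ of $w$, independently toggle $u$ versus $v$ at each occurrence of $a$ to build exponentially many distinct legal words, then convert this count into a lower bound on $h_{\mathrm{top}}(X_{\vartheta})$.

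More precisely, let $m(L)$ be the number of occurrences of $a$ in $w_{[0,L-1]}$, so $m(L)/L\to\nu(a)>0$ by assumption. Fix once and for all an arbitrary realisation of $\vartheta(b)$ for every letter $b\in\mc A$, and for each $\vec\epsilon\in\{0,1\}^{m(L)}$ define $W_{\vec\epsilon}$ by substituting $w_{[0,L-1]}$ letter-by-letter using those fixed realisations, except that at the $j$-th occurrence of $a$ we use $u$ if $\epsilon_j=0$ and $v$ if $\epsilon_j=1$. Then $W_{\vec\epsilon}\overset{\bullet}{=}\vartheta(w_{[0,L-1]})$, and extending our substitution choices beyond the window to a full realisation of $\vartheta(w)$ puts $W_{\vec\epsilon}$ as a subword of an element of $X_{\vartheta}$ via Lemma \ref{lem:closed}, so each $W_{\vec\epsilon}\in\mc L_{\vartheta}$.

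The crux is to show that the $2^{m(L)}$ words $W_{\vec\epsilon}$ are pairwise distinct; this is exactly where the non-strong-affix condition is essential, as the example $a\mapsto\{ab,abab\}$ before the theorem shows that otherwise massive collisions occur. Since $u$ is not a strong affix of $v$, either $u$ is not a prefix of $v$ or $u$ is not a suffix of $v$; I will treat the former case and note that the latter is symmetric after reading from the right. Given $\vec\epsilon\neq\vec\epsilon'$, let $j^*$ be the smallest index at which they differ, say $\epsilon_{j^*}=0$ and $\epsilon'_{j^*}=1$. All earlier substitutions were chosen identically, so $W_{\vec\epsilon}$ and $W_{\vec\epsilon'}$ coincide up to the position $P$ at which the $j^*$-th $a$-block begins; thereafter one word reads $u$ and the other begins with $v_{[0,|u|-1]}$, and since $u\neq v_{[0,|u|-1]}$ there is an index $k<|u|$ where these two blocks disagree, producing a disagreement between the two realisations at position $P+k$.

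Finally, set $M:=\max_{i,j}|w^{(i,j)}|$. Every $W_{\vec\epsilon}$ has length in $[L,ML]$, so by pigeonhole some $\ell^*\in[L,ML]$ satisfies $C(\ell^*)\geq 2^{m(L)}/(ML)$, where $C$ is the complexity function of $X_{\vartheta}$. Since $\ell^*\geq L\to\infty$ and
\[
\frac{\log C(\ell^*)}{\ell^*}\;\geq\;\frac{m(L)\log 2-\log(ML)}{ML}\;\xrightarrow[L\to\infty]{}\;\frac{\nu(a)\log 2}{M}\;>\;0,
\]
and the limit $h_{\mathrm{top}}(X_{\vartheta})=\lim_{\ell\to\infty}\log C(\ell)/\ell$ exists by Fekete's lemma (as already noted after the definition of entropy), one concludes $h_{\mathrm{top}}(X_{\vartheta})\geq\nu(a)\log 2/M>0$. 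The main obstacle throughout is the distinctness claim in the middle paragraph; everything else is a routine pigeonhole plus the existence of the entropy limit.
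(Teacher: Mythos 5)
Your proposal is correct, and the core idea is the same as the paper's: use the positive frequency of $a$ to find linearly many sites at which the splitting pair can be toggled, show the resulting realisations are pairwise distinct legal words, and convert the exponential count into a lower entropy bound. The execution differs in one genuine respect. The paper does not toggle each occurrence of $a$ independently: it groups the occurrences of $a$ into consecutive pairs and, within each block $a u_0 a$, chooses between $u\vartheta(u_0)v$ and $v\vartheta(u_0)u$. This keeps every realisation of $\vartheta(w_{[-n,n]})$ at a single common length (so no pigeonhole over lengths is needed) and makes distinctness a one-line prefix/suffix comparison of two equal-length words, at the cost of only $2^{\lfloor m/2\rfloor}$ realisations and hence the constant $\nu(a)\log 2/(2N)$. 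Your independent-toggle version yields $2^{m}$ words and the slightly better constant $\nu(a)\log 2/M$, but it forces you to deal with words of varying length, and this is exactly where your write-up is thinnest: in the case where $u$ is a prefix of $v$ but not a suffix, ``reading from the right'' only makes sense after you first observe that $\lvert W_{\vec\epsilon}\rvert\neq\lvert W_{\vec\epsilon'}\rvert$ already forces distinctness, and that when the lengths agree the common suffix to the right of the \emph{last} differing toggle occupies the same positions in both words, so that the $\lvert u\rvert$ letters immediately preceding it are $u$ in one word and the suffix of $v$ in the other. With that sentence added, your distinctness claim is airtight, and the pigeonhole over the at most $ML$ possible lengths together with the existence of the limit in the definition of $h_{\mathrm{top}}$ completes the argument exactly as you say. (One cosmetic point: since $\nu(a)$ is naturally a two-sided frequency, it is cleaner to work with the window $w_{[-n,n]}$ as the paper does, or to note that a positive two-sided frequency forces a positive $\limsup$ of the density on at least one side.)
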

\begin{proof}
Let $u$ and $v$ be a splitting pair admitted by $a_i$ with lengths  $|u|=k_1$ and $|v|=k_2$ and without loss of generality assume that $k_1 \leq k_2$. Denote by $W_n$ the set of different realisations of $\vartheta(w_{[-n,n]})$. If $N\ge |\vartheta(b)|$ for every $b \in \mc A$ and every possible realisation of $\vartheta(b)$, one clearly has
\begin{equation} \label{Eq:entropy2}
\limsup_{n \to \infty} \frac{\log|W_n|}{N\, (2n+1) } \leq \limsup_{n\to\infty} \frac{\log|W_n|}{|\vartheta(w_{[-n,n]})|} \leq h_{\text{top}}(X_\vartheta).
\end{equation}
Now, choose for every letter $b \neq a$ a fixed realisation $u_b$ of $\vartheta(v)$. The letter $a$ is alternately mapped to $u$ and $v$. In that case, given any word $u_0 \triangleleft w$ which does not contain the letter $a$, we obtain that the word $a u_0 a$ is mapped to either
\[
u \vartheta(u_0) v \quad \text{ or } \quad v \vartheta(u_0) u.
\]
These two words have the same length, and they are different by assumption because $u$ is not a strong affix of $v$. Suppose $|w_{[-n,n]}|_a$ is even. Let us partition the word $w_{[-n,n]}$ into alternating blocks $U_i$ which contain no $a$s and blocks $V_i$ which contain exactly two $a$s on their boundary. If $|w_{[-n,n]}|_a$ is odd, then we include a leftover block $\tilde{U}$ at the end of the word which contains exactly one $a$ (or is equal to $U_{\lfloor\frac{1}{2}|w_{[-n,n]}|_a\rfloor}$ if $|w_{[-n,n]}|_a$ is even). So if
\[
w_{[-n,n]} = u_1au_2au_3au_4au_5\cdots au_{k-1}au_k
\]
where $|u_j|_a = 0$ for all $j<k$ and $|u_k|_a \leq 1$, then the blocks $U_i$ are given by $U_i = u_{2i-1}$ and the blocks $V_i$ are given by $V_i = au_{2i}a$. So
\[
\begin{array}{rcl}
w_{[-n,n]} 	& = & \overbrace{u_1}^{U_1}\overbrace{au_2a}^{V_1}\overbrace{u_3}^{U_2}\overbrace{au_4a}^{V_2}\overbrace{u_5}^{U_3} \cdots \overbrace{au_{k-1}a}^{V_{\lfloor\frac{1}{2}|w_{[-n,n]}|_a\rfloor}}\overbrace{u_k}^{\tilde{U}}\\
			& = & U_1 V_1 U_2 V_2 U_3 \cdots V_{\lfloor\frac{1}{2}|w_{[-n,n]}|_a\rfloor} \tilde{U}
\end{array}
\]
Since, by the above, every two consecutive $a$s give rise to at least two different words under substitution, hence we have at least two possible choices of image under substitution for every block $V_i$ in the partition of $w_{[-n,n]}$, then we obtain
\[
|W_n| \ge 2^{\lfloor\frac{1}{2}|w_{[-n,n]}|_a\rfloor}.
\]
Consequently,
\[
\limsup_{n\to\infty} \frac{\log|W_n|}{N\, (2n+1)} \ge \limsup_{n\to\infty} \frac{\lfloor\frac{1}{2}|w_{[-n,n]}|_a\rfloor}{(2n+1)}\cdot \frac{\log(2)}{N} = \frac{\nu(a)}{2N}\, \log(2)>0.
\]
Hence, together with Eq.\ (\ref{Eq:entropy2}), the claim follows.
\end{proof}

\begin{remark}
It is not necessary that the letter $a$ has positive letter-frequency. The proof shows that the condition $\limsup_{n\to\infty} \frac{\lfloor\frac{1}{2}|w_{[-n,n]}|_{a}\rfloor}{(2n+1)} >0$ is sufficient. This condition is satisfied, for example, if $\vartheta$ is primitive. \exend 
\end{remark}

\begin{coro}
Let $\vartheta$ be a primitive random substitution with a letter admitting a splitting pair. Then $h_{\text{top}}(X_\vartheta) > 0$.
\end{coro}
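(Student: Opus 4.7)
The plan is to invoke Theorem \ref{thm:entropy} directly, using the observation from the subsequent remark that positive letter-frequency can be replaced by the weaker condition
\[
\limsup_{n\to\infty} \frac{\lfloor \tfrac{1}{2}|w_{[-n,n]}|_{a}\rfloor}{2n+1} > 0
\]
for some $w \in X_\vartheta$, where $a$ is the letter admitting a splitting pair. Thus the whole task reduces to exhibiting a single element of $X_\vartheta$ in which $a$ appears with positive lower density around the origin.

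To produce such a $w$, I would use primitivity of $\vartheta$ to pick $k \in \N$ such that $a \blacktriangleleft \vartheta^{k}(b)$ for every $b \in \mc A$. Since $\mc A$ is finite and each $\vartheta(b)$ has only finitely many realisations, the quantity $N := \max\{|v| : v \overset{\bullet}{=} \vartheta^{k}(b),\, b \in \mc A\}$ is a finite positive integer bounding the length of any realisation of $\vartheta^{k}$ applied to a single letter.

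Now pick any $w \in X_\vartheta$ (which is non-empty by our standing assumption) and apply Lemma \ref{lem:pre-images} iteratively $k$ times to produce $w' \in X_\vartheta$, a non-negative integer $n_{0}$, and a realisation with $w \overset{\bullet}{=} S^{n_{0}}(\vartheta^{k}(w'))$. Each letter of $w'$ contributes at least one occurrence of $a$ in its image under this realisation, while its image has length at most $N$. Therefore in any central window $w_{[-n,n]}$ of length $2n+1$ the letter $a$ occurs at least $\lfloor (2n+1)/N \rfloor - 2$ times, the $-2$ accounting for the two boundary blocks that might be only partially contained in the window. Dividing by $2n+1$ and letting $n \to \infty$ yields
\[
\liminf_{n\to\infty} \frac{|w_{[-n,n]}|_{a}}{2n+1} \; \geq \; \frac{1}{N} \; > \; 0,
\]
which is more than enough to verify the weaker $\limsup$ hypothesis of the remark.

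With this verified, Theorem \ref{thm:entropy} (via its remark) immediately gives $h_{\text{top}}(X_\vartheta) > 0$, as claimed. The only delicate point is the bookkeeping for the boundary of the window when passing from counting letters in $w'$ to counting letters in $w$; this is a harmless off-by-constant adjustment, so there is no substantive obstacle here.
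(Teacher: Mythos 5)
Your overall route---reduce to the remark following Theorem \ref{thm:entropy} and verify the $\limsup$ condition for a single element of $X_\vartheta$---is exactly the intended one, but the verification has a genuine gap. You take an \emph{arbitrary} $w \in X_\vartheta$, pull it back via Lemma \ref{lem:pre-images} to get $w \overset{\bullet}{=} S^{n_0}(\vartheta^k(w'))$, and then assert that ``each letter of $w'$ contributes at least one occurrence of $a$ in its image under \emph{this} realisation.'' That does not follow: primitivity gives $a \blacktriangleleft \vartheta^k(b)$, which is an \emph{existential} statement (some realisation of $\vartheta^k(b)$ contains $a$), whereas the realisation produced by the pre-image lemma is dictated by $w$ and is not yours to choose. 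Concretely, for $\vartheta \colon a \mapsto \{aa,ab\},\ b \mapsto \{ba,bb\}$ the letter $b$ admits a splitting pair, yet for $w = \cdots aaa.aaa\cdots \in X_\vartheta$ every pre-image realisation sends each letter to $aa$ and produces no $b$ at all; your intermediate claim is false for this $w$ and this letter. (The final conclusion is still true there because \emph{some other} element of $X_\vartheta$ contains $b$ with positive density---but your argument, as written, does not produce it.)

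The fix is to run the construction forwards rather than backwards, which is where the freedom to choose realisations actually lives: take any $v \in X_\vartheta$, and form $w \overset{\bullet}{=} \vartheta^k(v)$ using a realisation in which the image of \emph{every} letter is chosen (as primitivity permits) to contain an occurrence of $a$. Lemma \ref{lem:closed} guarantees $w \in X_\vartheta$, and since every block has length at most your $N$, the letter $a$ has lower density at least $1/N$ in $w$, which verifies the $\limsup$ condition of the remark. With that replacement your boundary bookkeeping and the appeal to Theorem \ref{thm:entropy} go through; this forward construction is what the paper's remark (``this condition is satisfied, for example, if $\vartheta$ is primitive'') implicitly relies on.
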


\begin{remark}
It is sufficient if some power of $\vartheta$ has a letter admitting a splitting pair. Consider for example the primitive substitution
\[
 \vartheta \colon a\mapsto \{ab, abab \}, b \mapsto \{abb\}.
\]
This substitution $\vartheta$ has no letter admitting a splitting pair, however both letters admit splitting pairs for the square $\vartheta^2$. Hence, the corresponding RS-subshift has positive entropy.  \exend
\end{remark}
We conjecture that for primitive substitutions, the converse of Theorem \ref{thm:entropy} is also true up to taking powers. That is, for a primitive random substitution $\vartheta$, we make the conjecture that the RS-subshift $X_\vartheta$ admits positive topological entropy if and only if there exists a power $k \geq 1$ and a letter $a \in \mc A$ such that $a$ admits a splitting pair for $\vartheta^k$.

\begin{example}
Consider the random Fibonacci substitution. By the previous theorem, we know that the corresponding RS-subshift has positive entropy because $a$ admits the splitting pair $ab$ and $ba$. The precise value is $\sum_{i=2}^{\infty} \frac{\log(i)}{\tau^{i+2}} \approx 0.444399$, which can be found in \cite[Ch. 3]{mo}.
\end{example}

\section{Examples and open questions}\label{sec:examples}
\begin{example}
Let $\mc A =\{0,1\}$ be a binary alphabet. Let $\mc A^\Z$ be the full shift on $\mc A$. We leave it as an exercise to the reader to show that if we define the random substitution $\vartheta$ by
\[
\vartheta \colon 0 \mapsto \{00,01,10,11\}, 1 \mapsto \{00,01,10,11\},
\]
then $\vartheta$ is primitive and $X_\vartheta = \mc A^\Z$. By writing the full shift in terms of a random substitution, this offers a novel proof for the well-known property that the full shift is topologically transitive and has a dense set of periodic points via a simple application of Propositions \ref{prop:top_tran} and \ref{prop:periodic} and the observation that the periodic element $\cdots 000.000 \cdots$ is in the full shift.
\end{example}
\begin{example}
We can give another representation of the full shift on two letters as an RS-subshift which is in some ways more appealing, although the substitution is an example where the set of realisations of a substituted letter is infinite. Due to this, one should be careful when applying the machinery that has been set up in the previous sections, as not all methods of proof carry over in the infinite-image case. For instance, the proof of Theorem \ref{thm:entropy} relies on the existence of a number $N \geq |\vartheta(a_j)|$ for every $a_j \in \mc A$, which this particular example does not satisfy (even though the conclusion of positive entropy still holds).

Let the random substitution $\vartheta$ on the alphabet $\mc A = \{a,b\}$ be given by
\[
\vartheta \colon \left\{
\begin{array}{rll}
  a & \mapsto ba^n, & \text{ with probabilty } \dfrac{1}{2^{n+1}}, \quad n = 0, 1, \ldots \\
  b & \mapsto ab^m, & \text{ with probabilty } \dfrac{1}{2^{m+1}}, \quad m = 0, 1, \ldots
\end{array}
\right. .
\]
The substitution $\vartheta$ is primitive. Any word $u \in \mc A^+$ can be written as $u = a^{n_1}b^{m_1}a^{n_2}b^{m_2}\cdots a^{n_k}b^{m_k}$ with $n_i,m_i \geq 1$ for all $1 \leq i \leq k$ except possibly $n_1, m_k = 0$. The word $u$ can then be rewritten as
\[
u = a^{n_1-1}(ab^{m_1-1})(ba^{n_2-1})(ab^{m_2-1}) \cdots (ba^{n_k-1})(ab^{m_k})
\]
where we obviously regroup letters if $n_1 = 0$. This then shows that $u$ is a subword of a realisation of $\vartheta((ab)^k)$. As $(ab)^k \blacktriangleleft \vartheta(ba^k) \blacktriangleleft \vartheta^2(a)$, it follows that $u \blacktriangleleft \vartheta^3(a)$ for all $u \in \mc A^\ast$ and so $X_\vartheta = \mc A^\Z$. Using the equality $\sum_{n=0}^\infty n/2^{n+1} = 1$, it is clear that the substitution matrix for $\vartheta$ is just the matrix
\[
M_\vartheta = 
\begin{bmatrix}
1&1\\
1&1
\end{bmatrix}
\]
and so we have Perron--Frobenius eigenvalue $\lambda_\vartheta = 2$ and normalised left and right eigenvectors $\boldsymbol L = [1,1]$ and $\boldsymbol R = [1/2,1/2]^T$.
For ease of notation, let us write the two-letter word $xy$ as $x_y$ in its `right-collared' form. The induced substitution $\vartheta_2$ on two-letter words is given by
\[
\vartheta_2 \colon \left\{
\begin{array}{rl}
  a_a & \mapsto \begin{cases}b_b, & \text{ with probabilty } \dfrac{1}{2} \\ b_a a_a^{n-1} a_b, & \text{ with probabilty } \dfrac{1}{2^{n+1}}, \quad n = 1, 2, \ldots\end{cases} \\
  a_b & \mapsto b_a a_a^n, \quad \quad \quad \:\:\:  \text{ with probabilty } \dfrac{1}{2^{n+1}}, \quad n = 0, 1, \ldots \\
  b_a & \mapsto a_b b_b^m, \quad \quad \quad \:\:\: \text{ with probabilty } \dfrac{1}{2^{m+1}}, \quad m = 0, 1, \ldots \\
  b_b & \mapsto \begin{cases}a_a, & \text{ with probabilty } \dfrac{1}{2} \\ a_b b_b^{m-1} b_a, & \text{ with probabilty } \dfrac{1}{2^{m+1}}, \quad m = 1, 2, \ldots\end{cases}
\end{array}
\right. .
\]
with associated induced substitution matrix given by
\[
M_{\vartheta_2} = 
\begin{bmatrix}
1/2 & 1 & 0 & 1/2\\
1/2 & 0 & 1 & 1/2\\
1/2 & 1 & 0 & 1/2\\
1/2 & 0 & 1 & 1/2
\end{bmatrix}.
\]
As expected, the Perron--Frobenius eigenvalue of $M_{\vartheta_2}$ is still $\lambda_{\vartheta_2} = 2$ and the left and right eigenvectors are given by $\boldsymbol L = [1,1,1,1]$ and $\boldsymbol R = [1/4,1/4,1/4,1/4]^T$.

The above suggests that the measure $\mu$ associated with $\vartheta$ most likely coincides with the uniform Bernoulli measure for the full shift.
\end{example}

\begin{example}\label{ex:golden}
Let $\mc A =\{0,1\}$. Let $\mc F = \{11\}$ be the set of forbidden words for the shift of finite type 
\[
X_{\mc F} = \{w \in \mc A^\Z \mid u \in \mc F \implies u \notin \mc L(w)\}.
\]
The subshift $X_{\mc F}$ is often called the \emph{golden shift}. We claim that the RS-subshift $X_\vartheta$ associated with the primitive random substitution given by
\[
\vartheta \colon 0 \mapsto \{010,0\}, 1 \mapsto \{01,1\}
\]
is equal to the shift of finite type $X_{\mc F}$. Clearly the word $11$ is not in the language $\mc L_\vartheta$ because there is no letter $a$ such that $11 \blacktriangleleft \vartheta(a)$ and the only two letter word $ab$ such that $11 \blacktriangleleft \vartheta(ab)$ is $ab = 11$ itself. So we clearly have $X_\vartheta \subseteq X_{\mc F}$.

For the other inclusion, let $u \in \mc L(X_{\mc F})$. Suppose that $|u|_1 = m$. We can form the word $(01)^{m+1}$ as a subword of $\vartheta^{m+1}(0)$ by always realising $\vartheta(0)$ as $010$ and $\vartheta(1)$ as $1$. We may then generate $u$ from $(01)^{m+1}$ by `padding out' enough $0$s in between the $1$s. For instance, to generate the word $0100100010$ we substitute
\[
(01)^4 = 010\dot{1}0\dot{1}01 	 \mapsto 0\;1\;0\;01\;0\;0\dot{1}\;0\;1 \mapsto 0\;1\;0\;0\;1\;0\;0\;01\;0\;1,
\]
where a dot above a letter indicates that it will be substituted non-trivially. This word contains $0100100010$ as a subword. In this way, every element of the language $\mc L(X_{\mc F})$ is legal for $\vartheta$ and so $X_{\mc F} \subseteq X_\vartheta$.
\end{example}

The above result can be generalised using a more systematic method. It can be shown that every topologically transitive shift of finite type can be realised up to topological conjugacy as a primitive RS-subshift \cite{grs}.

\begin{example}
Let $\mc A = \{0,1\}$ be a binary alphabet. Let
\[
\vartheta_{PD} \colon 0 \mapsto \{01,10\}, 1 \mapsto \{00\}
\]
be the so-called \emph{random period doubling substitution}, see \cite{bss,htw}. The substitution matrix is given by
\[
M_{\vartheta_{PD}} =
\begin{bmatrix}
1&2\\
1&0
\end{bmatrix}
\]
which is primitive with Perron--Frobenius eigenvalue $\lambda_{\vartheta_{PD}} = 2$ and normalised left and right eigenvectors $\boldsymbol L = [\frac{1}{3},\frac{1}{3}]$ and $\boldsymbol R = [2,1]$. This implies that the ratio of $0$s to $1$s in any substituted word will be $2:1$. It follows that if any periodic element $w$ of $X_{\vartheta_{PD}}$ exists, the periods of $x$ must be divisible by $3$. Indeed, we find that the $3$-periodic element $\cdots 001\;001.001\;001\cdots$ is $\vartheta_{PD}$-legal. The word $001$ appears as a subword of $0010 \blacktriangleleft \vartheta_{PD}(10) \blacktriangleleft \vartheta_{PD}^2(0)$. From $001$ we can generate any power $(001)^k$ as a subword of $\vartheta_{PD}^{n_k}(001)$ for some $n_k$ by using the rule that we always substitute $0_0 \mapsto 10$, $0_1 \mapsto 01$ and $1 \mapsto 00$, where here we use the notation that $x_y$ is any appearance of $x$ immediately preceding a $y$. Thus, we have
\[
001 \mapsto 10\;01\;00 \mapsto 00\;10\;01\;00\;10\;01 \mapsto 10\;01\;00\;10\;01\;00\;10\;01\;00\;10\;01\;00 \mapsto \cdots
\]

The method for producing a periodic point in this way is rather ad hoc. It would be useful to have a method which works in general. Certainly, as in the case of the random Fibonacci substitution, a sufficient obstruction to periodic points existing in the RS-subshift is for the relative frequencies of a pair of letters (or words of a given length $\ell$) to be irrationally related for all elements of the subshift.

\begin{question}
Does there exist an effective method for determining if $\operatorname{Per}(X_\vartheta)$ is empty or non-empty?
\end{question}

By Proposition \ref{prop:periodic}, the periodic points of $X_{\vartheta_{PD}}$ form a proper dense subset (proper because not all elements are periodic). Although Theorem \ref{thm:entropy} tells us that $X_{\vartheta_{PD}}$ has positive entropy, we actually know more. For the specific case of $\vartheta_{PD}$ being the random period doubling substitution, the entropy $h_{\text{top}}(X_{\vartheta_{PD}})$ has been calculated in \cite{bss} to be
\[
h_{\text{top}}(X_{\vartheta_{PD}}) = \frac{2}{3}\log 2.
\]

Certain aspects of the RS-subshift $X_{\vartheta_{PD}}$ are similar to those of an irreducible shift of finite type. For instance $X_{\vartheta_{PD}}$ is a Cantor set by Proposition \ref{prop:cantor}, is topologically transitive by Proposition \ref{prop:top_tran}, contains a dense set of periodic points, and has topological entropy given by the logarithm of an algebraic number.

Recall that a subshift $X$ is \emph{topologically mixing}, or just \emph{mixing} if for all $u, v \in \mc L(X)$, there exists an $N \geq 0$ such that for all natural numbers $n \geq N$, there exists a word $w$ of length $|w| = n$ such that $uwv \in \mc L(X)$.
Recall that if $f \colon X \to Y$ is a factor map of dynamical systems, then if $X$ is mixing, so is $Y$. A shift of finite type is called \emph{primitive} if it is conjugate to an edge shift of a directed graph $G$ whose adjacency matrix $A_G$ is primitive. A shift of finite type is primitive if and only if it is mixing \cite[Prop. 4.5.10]{lm}, hence all factors of primitive shifts of finite type are mixing.

Let $u = v = bb$. This word is $\vartheta_{PD}$-legal. Moreover, $bb$ can only ever appear as a $\vartheta_{PD}$-legal subword of the word $abba$ with the only possible partition of this word into legal substituted words being $abba = (ab)(ba) \overset{\bullet}{=} \vartheta_{PD}(a)\vartheta_{PD}(a)$. Suppose there exists a word $w$ of odd length such that $uwv$ is legal. By the above, this means that the word $auwva$ must also be legal, and moreover this word can be partitioned exactly into legal substituted words because the end points of this word are uniquely partitioned as such. However, every substituted word has even length, and so the length of $auwva$ must also be even. This contradicts the assumption that the length of $w$ is odd. It follows that $X_{\vartheta_{PD}}$ is not mixing and so $X_{\vartheta_{PD}}$ cannot be a factor of a primitive shift of finite type.
\end{example}
It would be nice to extend the above argument to a general class of random substitutions which are not mixing. Clearly some RS-subshifts are mixing, as in the case of the golden shift and the full shift.
\begin{question}
Given a primitive random substitution $\vartheta$, can we determine necessary or sufficient conditions for $X_\vartheta$ to be topologically mixing?
\end{question}
As RS-subshifts have potentially non-trivial subsets of periodic points, it makes sense to try and characterise the structure of the periodic points of an RS-subshift. This is often studied via the \emph{Artin-Mazur zeta function} of the subshift. The zeta function $\zeta_{\vartheta}$ is defined by
\begin{equation} \label{Eq:zeta}
\zeta_\vartheta(z) = \exp\left(\sum_{n=1}^\infty |\operatorname{Fix}(S^n)|\frac{z^n}{n}\right)
\end{equation}
where $\operatorname{Fix}(S^n) = \{w \in X_\vartheta \mid S^n(w) = w\}$. The zeta function is an invariant of $X_\vartheta$ up to topological conjugacy. The zeta functions of shifts of finite type have been well studied \cite{lm} and so we can determine $\zeta_\vartheta$ for some of the examples considered in this section. In fact, the zeta function is just given as the reciprocal of the characteristic polynomial of a non-negative matrix associated to the shift. For instance, we know that the golden shift of Example \ref{ex:golden} has zeta function $\zeta_\vartheta(z) = (\det (1 - z[\begin{smallmatrix}1&1\\1&0\end{smallmatrix}]))^{-1} = (z^2 - z -1)^{-1}$. This relies on us knowing that the golden shift can be represented as a shift of finite type however.

\begin{example}
One can also describe some sofic shifts of non-finite type using random substitutions. Consider the random substitution 
\[
\vartheta \colon a \mapsto \{ab,ba\}, b \mapsto \{ab,ba\}.
\]
It is not difficult to see that $X_{\vartheta}$ is a sofic subshift. The right-resolving graph of $X_\vartheta$ is shown in Figure \ref{fig:sofic}---elements of $X_\vartheta$ are precisely those coded by bi-infinite directed paths in the right-resolving graph with the corresponding edge-labels.
\begin{figure}[h]
\centering

\begin{tikzpicture} 
  \SetGraphUnit{3}
  \Vertex{2}
  \WE(2){1}
  \EA(2){3}
  \Edge[label = $a$, color = red](1)(2)
  \Edge[label = $a$, color = red](2)(3)
  \Edge[label = $b$, color = blue](3)(2)
  \Edge[label = $b$, color = blue](2)(1)
  \tikzset{EdgeStyle/.append style = {bend left = 50}}
\end{tikzpicture}

\caption{The right-resolving graph representation of $X_\vartheta$ as a sofic shift.}

\label{fig:sofic}

\end{figure}
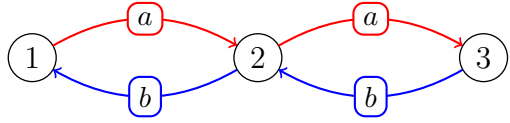
Applying standard techniques \cite[Thm. 6.4.8]{lm}, we find that the zeta function $\zeta_{\vartheta}$ is given by
\[
\zeta_{\vartheta}(z) = \frac{1-z^2}{1-2z^2} \, ,
\]
whence $X_{\vartheta}$ cannot be an SFT. However, using the explicit structure of $\vartheta$, there is an elementary method for determining $\zeta_{\vartheta}$. One quickly verifies that $|\operatorname{Fix}(S^n)| = 2^{k+1}-2$ if $n=2k$, and $0$ otherwise. This comes from the fact that every legal word of length $2k$ can appear as the periodic block in a periodic word of period $2k$. Such a word is either an exact concatenation of words of the form $\vartheta(a)$ so $u \overset{\bullet}{=} \vartheta(a)^k$ (of which there are $2^k$ choices of $\vartheta(a)$) or is the exact concatenation of words of the form $\vartheta(a)$ extended by either an $a$ or $b$ on the left and then by the other choice of $b$ or $a$ on the right so $u \overset{\bullet}{=} x\vartheta(a)^{k-1}y$ where $x \neq y$ (of which there are $2\cdot 2^{k-1} = 2^{k}$ choices). The only words of length $2k$ that can be formed in both ways are the words $(ab)^k$ and $(ba)^k$, as any appearance of the subword $aa$ or $bb$ uniquely determines the supertile structure. It follows that we have $2^k + 2^k - 2 = 2^{k+1}-2$ possible legal words of length $2k$ and hence $|\operatorname{Fix}(S^n)| = 2^{k+1}-2$.

Now, by Eq.\ (\ref{Eq:zeta}), we calculate
\[
\begin{array}{rcl}
\zeta_\vartheta(z)	& = & \exp \left(\sum_{k=1}^{\infty}(2^k-1) \frac{z^{2k}}{k}\right)\\
					& = & \exp \left(\sum_{k=1}^{\infty}\frac{(2z^2)^k}{k} - \sum_{k=1}^{\infty}\frac{(z^2)^k}{k}\right)\\
					& = & \exp \left(- \log (1-2z^2) + \log (1-z^2)\right) \\
					& = & \frac{1-z^2}{1-2z^2}.
\end{array}
\]
\end{example} 

We have also calculated, via ad hoc methods, the first few terms of the zeta function of the random period doubling substitution to be $\zeta_{\vartheta_{PD}}(z) = \exp(z^3 + 2z^6 + \frac{5}{3}z^9 + \cdots)$ but we presently have no method for determining arbitrarily large terms and are far from being able to present $\zeta_{\vartheta_{PD}}(z)$ as a closed form expression in $z$.
\begin{question}
Given a primitive random substitution $\vartheta$, does there exist an effective method for calculating its zeta function $\zeta_\vartheta$?
\end{question}
As suggested by the results of Section \ref{sec:entropy}, the topological entropy is also a useful invariant of RS-subshifts. Again, in the case that the RS-subshift is conjugate to a shift of finite type, there are well-known methods for calculating the topological entropy of the subshift, given in terms of the logarithm of the Perron--Frobenius eigenvalue of an integer matrix. Also, for specific examples, and small families of examples which are not shifts of finite type, we can give an explicit description of the topological entropy \cite{bss,gl,mo,nil,wi}. However, these methods do not obviously generalise.
\begin{question}
Given a primitive random substitution $\vartheta$, does there exist an effective method for calculating the topological entropy of its RS-subshift $h_{\text{top}}(X_\vartheta)$?
\end{question}

\section*{Acknowledgements}
The authors wish to thank Michael Baake, Chrizaldy Neil Manibo and Philipp Gohlke for helpful 
discussions. This work is supported by the German Research Foundation (DFG) via the Collaborative Research Centre (CRC 1283) through the faculty of Mathematics, 
Bielefeld University.

\end{document}